\newcommand{\enne}{\mathbb{N}}
\newcommand{\dint}{\displaystyle{\int}}
\newtheorem{theorem}{Theorem}[section]
\newtheorem{proposition}[theorem]{Proposition}
\newtheorem{corollary}[theorem]{Corollary}
\newtheorem{remark}[theorem]{Remark}
\newtheorem{definition}[theorem]{Definition}
\newtheorem{example}[theorem]{Example}
\newenvironment{dedication}
  {
   \itshape             
   \raggedleft          
  }
  {\par 
  }
\begin{document}

\title[Properties of the Riemann-Lebesgue integrability in...]{Properties of the Riemann-Lebesgue integrability in the non-additive
case}

\begin{dedication}
dedicated to Mimmo: a Master, a Colleague, a dear Friend and a Gentleman who passed away, but lives in our hearts\\
Domenico Candeloro, \dag\, May 3, 2019\\ \mbox{~}
\end{dedication}

\author{D. Candeloro}
 \address{ Department of Mathematics and Computer Sciences, University of  Perugia 1, Via Vanvitelli - 06123, Perugia, ITALY} 
               \email{domenico.candeloro@unipg.it}  
\author{A. Croitoru}
 \address{  University ''Alexandru Ioan Cuza'', Faculty of Mathematics, Bd. Carol I,
No. 11, Ia\c{s}i, 700506, ROMANIA}
 \email{croitoru@uaic.ro} 
\author{ A. Gavrilu\c{t}} 
\address{ University ''Alexandru Ioan Cuza'', Faculty of Mathematics, Bd. Carol I,
No. 11, Ia\c{s}i, 700506, ROMANIA}
 \email{gavrilut@uaic.ro} 
\author{ A.  Iosif }
\address{Petroleum-Gas University of Ploie\c{s}ti, Department of Computer Science,
          Information Technology, Mathematics and Physics, Bd. Bucure\c{s}ti, No. 39,
          Ploie\c{s}ti 100680, ROMANIA}
          \email{emilia.iosif@upg-ploiesti.ro}
\author{ A. R. Sambucini}
\address{Department of Mathematics and Computer Sciences, University of  Perugia 1, Via Vanvitelli - 06123, Perugia, ITALY}
 \email{anna.sambucini@unipg.it}

\maketitle

\begin{abstract}
We study Riemann-Lebesgue integrability of a vector function relative to an arbitrary non-negative set
function. We obtain some classical integral properties. Results regarding the continuity properties of the integral and
relationships among Riemann-Lebesgue,  Birkhoff simple and Gould integrabilities are also established.
\end{abstract}
{\noindent \small {\bf Key words} Riemann-Lebesgue integral, 
Birkhoff simple integral, Gould integral,  Non-negative
set function,  Monotone measure.\\
{\bf AMS class}: 28B20, 28C15, 49J53}
\section{Introduction}\label{intro}
Riemann and Lebesgue integrals admit different extensions, some of
them to the case of finitely additive, non-additive or set valued
measures.
Although (countable) additivity is one of the most important notion
in measure theory, it can be useless in many
problems, for example modeling different real aspects in data mining, computer
science, economy, psychology, game theory, fuzzy logic, decision
making, subjective evaluation. 
Non-additive measures have been used with different names: 
 J.von Neumann and O. Morgenstern  called them cooperative games  in Economics; 
M. Sugeno called a non additive set function a fuzzy measure and proposed an 
integral with respect to a fuzzy measure useful for systems science.
 In \cite{torra} a broad overview of non-additive measures and their applications is presented. 
So, in general,  non-additive measures are used when models based on the additive ones are not appropriate and that motivated us to study some integral properties in the non-additive case.

 In the framework of an extension of the Riemann and Lebesgue integrals, a way of defining a new integral is
to generalize the Riemann sums as in
\cite{Bir,CCG,DM,GP,Ga,Gav,GIC,G,dmm16,
K,K1,Pap,cmn17,Pap1,P,PC,PGC,PS,Si,SB,Spa,bcs2014}.
One of these extensions has been defined by Birkhoff \cite{Bir}, using
countable sums for vector functions with respect to complete
finite measures. Then this integral has been intensively studied and generalized in \cite{BM,BS1,bmis,CDPMS1,CDPMS2,CCG1,CR,CR1,CG,CS1,
FMNR,F,M,Me,Po,Po1,R1,R2}. It is
well-known that the Birkhoff integral and the (generalized) McShane
integral lie strictly between the Bochner and Pettis integrals.
The notions of Bochner and Birkhoff integrabilities are equivalent
for bounded single-valued functions with values in a separable
Banach space. Also, if the range of the Banach-valued function is separable, then
Pettis, Birkhoff and McShane integrabilities coincide (see also \cite{dp-p}). The
situation changes deeply in the non separable case, namely: every
Birkhoff integrable function is McShane integrable and every
McShane integrable function is Pettis integrable, but none of the
reverse implications holds in general (see \cite{R3}).

Another generalization belongs to Kadets and Tseytlin \cite{K}, who
introduced two integrals, called absolute Riemann-Lebesgue ($|RL|$)
and unconditional Riemann-Lebesgue ($RL$), for functions with
values in a Banach space, relative to a countably additive
measure. In \cite{K}, it is proved that if $(T,\mathcal{A},m)$ is a finite
measure space, then the following implications hold:

$f$ is Bochner integrable $\Rightarrow$ $f$ is $|RL|$-integrable
$\Rightarrow$ $f$ is $RL$-integrable $\Rightarrow$ $f$ is Pettis
integrable.

If $X$ is a separable Banach space, then $|RL|$-integrability
coincides with Bochner integrability and $RL$-integrability
coincides with Pettis integrability. Also, according to \cite{Pap1}, if
$(T,\mathcal{A},m)$ is a $\sigma$-finite measure space, then the Birkhoff
integrability \cite{Bir} is equivalent to the $RL$-integrability.

In this paper, we study the Riemann-Lebesgue integral with respect
to an arbitrary set function, not necessarily  countably additive.
The paper is organized in four sections: Section 1 is
devoted to the introduction;  after, in  Section 2, some basic concepts and results are presented,while in
Section 3  the Riemann-Lebesgue integrability relative to an
arbitrary set function is presented togheter with some classical (Theorem \ref{add-int}) and continuity properties (Theorem \ref{properties}) of the integral. Finally Section 4
contains some comparative results among this Riemann-Lebesgue integral and some other integrals known in the literature as the  
Birkhoff simple and the  Gould integrabilities. In particular we show that the Gould and the Riemann-Lebesgue integrabilities coincide when a bounded function is integrated with respect to a $\sigma$-additive (or monotone and $\sigma$-subadditive) measure of finite variation, see in this regard Proposition \ref{4.10} and Theorem \ref{part1}; lastly an example (Example \ref{esempio}) is given in order to show that the equivalence, in general, does not hold.

\section{Preliminaries}\label{prelim}
Let $T$ be a nonempty at least countable set, $\mathcal{P}(T)$ the family of all subsets of 
$T$ and $\mathcal{A}$ a $\sigma $-algebra of subsets of $T.$ Let $\mathbb{N}^{*}=\{1, 2, 3,\ldots\}.$ 

\begin{definition}\label{2.1}
\rm \quad
\begin{itemize}
\item[(\ref{2.1}.i)] A finite (countable, resp.) partition of $T$ is a finite
(countable, resp.) family of nonempty sets $P=\{A_{i}\}_{i=1, \ldots, n}$ ($\{A_{n}\}_{n\in \mathbb{N}}$, resp.) $\subset \mathcal{A}$
such that $A_{i}\cap A_{j}=\emptyset ,i\neq j$ and $\bigcup
\limits_{i=1}^{n}A_{i}=T$ ($\bigcup\limits_{n\in \mathbb{N}}A_{n}=T$,
resp.)$. $ 

\item[(\ref{2.1}.ii)] If $P$ and $P^{\prime }$ are two 
partitions of $T$, then $P^{\prime }$ is said to be \textit{finer than} $P$,
denoted\ by $P\leq P^{\prime }$ (or $P^{\prime }\geq P)$, if every set of $
P^{\prime }$ is included in some set of $P$. 

\item[(\ref{2.1}.iii)] The \textit{common refinement} of two  finite or countable
partitions $P=\{A_{i}\}$ and $P^{\prime }=\{B_{j}\}$ is the partition $
P\wedge P^{\prime }=\{A_{i}\cap B_{j}\}$.
\end{itemize}
\end{definition}
We denote by $\mathcal{P}$\textrm{ the class of all the partitions of }$T$
\textrm{and if }$\mathrm{
A\in }$ $\mathcal{A}$\textrm{\ is fixed, by }$\mathcal{P}_{\mathrm{A}}$
\textrm{\ we denote the class of all the partitions of }$\mathrm{A.}$

\begin{definition}\label{2.2}
\rm
  Let $m :\mathcal{A}\rightarrow \lbrack 0,+\infty )$
be a non-negative function, with $m (\emptyset )=0.$
$m $ is said to be: 
\begin{itemize}
\item[(\ref{2.2}.i)]
 \textit{monotone} if $m (A)\leq m (B)$, $\forall A,B\in
\mathcal{A}$, with $A\subseteq B$; 
\item[(\ref{2.2}.ii)]  \textit{\ subadditive} if $m (A\cup B)\leq m (A)+ m (B),$
for every $A,B\in \mathcal{A}$, with $A\cap B=\emptyset ;$ 
\item[(\ref{2.2}.iii)]  \textit{a submeasure } (in the sense of Drewnowski)
if $m $ is monotone and subadditive; 
\item[(\ref{2.2}.iv)] $\sigma $\textit{-subadditive} if $m (A)\leq
\sum\limits_{n=0}^{+\infty }m (A_{n}),$ for every sequence of (pairwise
disjoint) sets\textit{\ }$(A_{n})_{n\in \mathbb{N}}\subset $ \textit{$%
\mathcal{A}$, }with $A=\bigcup\limits_{n=0}^{+\infty }A_{n}$. 
\end{itemize}

\end{definition}
 Let $(X,\|\cdot\|)$ be a Banach space and $m :\mathcal{A}\rightarrow X$
be a vector set function, with $m (\emptyset )=0.$
\begin{definition}\label{2.2a}\rm 
A vector set function $m $ is said to be:
\begin{itemize} 
\item[(\ref{2.2a}.i)] \textit{finitely additive} if $m (A\cup B)=m (A)+ m (B),$
for every disjoint sets $A,B\in \mathcal{A};$ 
\item[(\ref{2.2a}.ii)]   \textit{$\sigma $-additive} if $m
(\bigcup\limits_{n=0}^{+\infty }A_{n})=\sum\limits_{n=0}^{+\infty} m (A_{n})$
, for every sequence of pairwise disjoint sets $(A_{n})_{n\in \mathbb{N}
}\subset \mathcal{A}$; 
\item[(\ref{2.2a}.iii)]   \textit{order-continuous} (shortly, \textit{o-continuous}) if $
\lim\limits_{n\rightarrow +\infty }m (A_{n})=0,$ for every decreasing
sequence of sets $(A_{n})_{n\in \mathbb{N}}\subset \mathcal{A}$, with $\bigcap\limits_{n=0}^{+\infty}A_{n}= \emptyset$ (denoted by
$A_{n}\searrow \emptyset $); 
\item[(\ref{2.2a}.iv)]   \textit{exhaustive }if $\lim\limits_{n\rightarrow +\infty }m
(A_{n})=0,$ for every sequence of pairwise disjoint sets $(A_{n})_{n\in
\mathbb{N}}\subset \mathcal{A}.$ 
\item[(\ref{2.2a}.v)]  
 {\em null-additive} if, for every $A, B \in \mathcal{A}$, $m(A \cup B) = m(A)$ when $m(B) = 0$.
\end{itemize}
Moreover $m$ 
satisfies property $(\sigma)$ if 
 the ideal of $m$-zero sets is stable under countable unions, namely
 for every sequence $(A_k)_k$ such that $m(A_k) = 0$ for each $k \in \enne$, then also
$m(\bigcup _k A_k) = 0$.

\end{definition}

\begin{definition}\label{var}
 \rm  
 The variation $\overline{m }$ of $m$ is the set
function $\overline{m }:\mathcal{P}(T)\rightarrow \lbrack 0,+\infty ]$
defined by $\overline{m }(E)=\sup \{\sum\limits_{i=1}^{n}\|m (A_{i})\|\}$,
for every $E\in \mathcal{P}(T)$, where the supremum is extended over all
finite families of pairwise disjoint sets $\{A_{i}\}_{i=1}^{n}\subset
\mathcal{A}$, with $A_{i}\subseteq E$, for every $ i=1, \ldots, n$.\\
 $m $ is said to be \textit{of finite variation} (on $\mathcal{A%
}$) if $\overline{m }(T)<+\infty$.\\ 
$\widetilde{m}$ $:\mathcal{P}(T)\rightarrow \lbrack
0,+\infty ]$ is defined for every $A\subseteq T$, by
$
\widetilde{m }(A)=\inf \{\overline{m }(B);A\subseteq B,B\in \mathcal{A}
\}$.
\end{definition}

\begin{remark}\label{rem-var}
\rm 
 $\overline{m}$ is monotone and super-additive on $\mathcal{P}%
(T)$, that is $\overline{m}(\bigcup_{i\in I}A_{i})\geq
\sum_{i\in I}\overline{m}(A_{i}),$ for every finite or countable
partition $\{A_{i}\}_{i\in I}$ of $T$.\\
 If $m:\mathcal{A}\rightarrow [0,+\infty)$ is finitely additive, then $\overline{m}(A)=m(A),$
for every $A\in \mathcal{A}.$
\\
 If $m:\mathcal{A}\rightarrow [0,+\infty) $ is subadditive ($\sigma$-subadditive,
respectively), then $\overline{m}$ is finitely additive ($\sigma $-additive, respectively). 
\end{remark}

For all unexplained definitions, see for example \cite{CCG1,CCG2}.
\section{Riemann-Lebesgue integrability with respect to an arbitrary set function}\label{RLint}

In this section, we study Riemann-Lebesgue integrability of vector
functions with respect to an arbitrary non-negative set function,
and point out some classic properties and continuity properties of
the integral.
In what follows, 
$m:\mathcal{A}\to [0,+\infty)$ is a non-negative
set function.\\

As in \cite[Definition 4.5]{K} (for scalar functions) and \cite[Definition 7]{Po} and \cite{K}, (for vector functions),  we introduce the following definition: 

\begin{definition}\label{3.1}
\rm A  function $f:T\to X$ is called
\textit{absolutely (unconditionally} respectively) \textit{Riemann-Lebesgue} ($|RL|$) ($RL$ respectively)
\textit{$m$-integrable} (on $T$) if there exists $b\in X$
such that for every $\varepsilon>0$, there exists a countable partition
$P_{\varepsilon}$ of $T$, so that for every other countable partition
$P=\{A_{n}\}_{n\in \mathbb{N}}$ of $T$ with $P\geq P_{\varepsilon}$,
$f$ is bounded on every $A_{n}$, with $m(A_{n})>0$ and for every $t_{n}\in A_{n}$, $n\in \mathbb{N}$,
the series $\sum_{n=0}^{+\infty}f(t_{n})m(A_{n})$ is absolutely (unconditionally respectively)
convergent and $\|\sum_{n=0}^{+\infty}f(t_{n})m(A_{n})-b\|<\varepsilon.$
\end{definition}
The vector $b$ (necessarily unique) is called \textit{the absolute (unconditional)
Riemann-Lebesgue} $m$\textit{-integral of} $f$ \textit{on} $T$ and it
is denoted by ${\scriptstyle (|RL|)}\displaystyle{\int_T}fdm$ (${\scriptstyle (RL)}\displaystyle{\int_T}fdm$ respectively).

The $|RL|$ ($RL$ respectively) definitions of the integrability on a subset $A\in \mathcal{A}$ are defined in the classical manner. 
Moreover, the next characterization holds:
\begin{theorem}\label{3.4}
For a vector function $f: T \to X$, the following properties hold:
\begin{itemize}
\item[\ref{3.4}.a)]
If $f$  is $|RL|$ $m $-integrable on $T$, then  $f$ is $|RL|$ $m $-integrable on  every   $A\in
\mathcal{A}$;
\item[\ref{3.4}.b)]  $f$ is $|RL|$ $m $-integrable on  every   $A\in
\mathcal{A}$ if and only if 
 $f\chi _{A}$ is $|RL|$ $m $-integrable on $T$.
 In this case, ${\scriptstyle (|RL|)}\dint_{A}fdm ={\scriptstyle (|RL|)}\dint_T f\chi _{A}dm .$
 \end{itemize}
 (The same holds for $RL$-integrability).
\end{theorem}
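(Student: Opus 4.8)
The plan is to prove the two parts in sequence, reducing everything to the definition of $|RL|$-integrability via countable partitions.

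For part \ref{3.4}.a), suppose $f$ is $|RL|$ $m$-integrable on $T$ and fix $A \in \mathcal{A}$. I would start from the Cauchy-type reformulation of the definition: integrability on $T$ says that for every $\varepsilon > 0$ there is a countable partition $P_\varepsilon$ of $T$ such that any two partitions finer than $P_\varepsilon$ produce Riemann-Lebesgue sums within $\varepsilon$ of each other (and of the integral $b$). The key idea is to transfer partitions of $A$ into partitions of $T$. Given a partition $Q$ of $A$, I would complete it to a partition of $T$ by adjoining the single block $T \setminus A$ (or, to achieve a prescribed fineness on $T \setminus A$, by adjoining a fixed countable partition of $T \setminus A$). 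Concretely, if $P_\varepsilon = \{C_k\}$ is the partition of $T$ witnessing integrability on $T$, I would define a candidate witnessing partition of $A$ by intersecting: $P_\varepsilon^A := \{C_k \cap A : C_k \cap A \neq \emptyset\}$, and similarly intersect $P_\varepsilon$ with $T \setminus A$. The point is that for any partition $Q = \{A_n\}$ of $A$ finer than $P_\varepsilon^A$, the family $Q \cup (P_\varepsilon \wedge \{T \setminus A\})$ is a partition of $T$ finer than $P_\varepsilon$, so its full Riemann-Lebesgue sum is controlled by the definition. Since the contribution of the blocks inside $T \setminus A$ is the same fixed quantity for two such completions differing only on $A$, the differences cancel and I obtain that the partial sums over $Q$ form a Cauchy family; this yields a limit $b_A \in X$, which is the $|RL|$-integral of $f$ on $A$.

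For part \ref{3.4}.b), the equivalence between integrability of $f$ on $A$ and integrability of $f\chi_A$ on $T$ is essentially a bookkeeping identity between the two families of Riemann-Lebesgue sums. The main observation is that for any point $t \in T \setminus A$ we have $f\chi_A(t) = 0$, so in any Riemann-Lebesgue sum $\sum_n f\chi_A(t_n) m(A_n)$ over a partition $\{A_n\}$ of $T$, only the blocks meeting $A$ contribute, and even those contribute only through tags chosen in $A$. I would argue both directions: given a partition of $A$ witnessing integrability of $f$ on $A$, I adjoin the block $T \setminus A$ (on which $f\chi_A$ vanishes, making its contribution zero regardless of tag and regardless of $m(T\setminus A)$) to obtain a witnessing partition of $T$ for $f\chi_A$; conversely, given a witnessing partition of $T$ for $f\chi_A$, I intersect with $A$ to obtain a witnessing partition of $A$ for $f$. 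Under this correspondence the two Riemann-Lebesgue sums are literally equal on the contributing blocks, so convergence, absolute convergence, and the value of the limit all transfer, giving ${\scriptstyle (|RL|)}\dint_A f\, dm = {\scriptstyle (|RL|)}\dint_T f\chi_A\, dm$.

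The main obstacle I anticipate is handling the boundedness and the blocks with $m = 0$ cleanly when passing between $A$ and $T$. The definition requires $f$ to be bounded on every block of positive measure and requires control only on those blocks, so I must check that adjoining $T \setminus A$ (or a partition of it) does not introduce a positive-measure block on which boundedness fails — this is automatic for the $f\chi_A$ reformulation since $f\chi_A \equiv 0$ there, but for part \ref{3.4}.a) I need the blocks of $P_\varepsilon \wedge \{T\setminus A\}$ to inherit boundedness from the original partition $P_\varepsilon$ of $T$, which they do because refinement preserves boundedness and these blocks are subsets of blocks of $P_\varepsilon$. A second delicate point is verifying that the refinement relation behaves correctly under the intersection/adjunction operations, i.e.\ that $Q \geq P_\varepsilon^A$ on $A$ genuinely lifts to $Q \cup (\text{fixed completion}) \geq P_\varepsilon$ on $T$; this requires only that every block of the completion is contained in a block of $P_\varepsilon$, which holds by construction via the common refinement $\wedge$. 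Once these fineness and boundedness bookkeeping issues are dispatched, the absolute convergence and the $\varepsilon$-estimates follow directly, and the identical argument carries over verbatim to the $RL$ (unconditional) case.
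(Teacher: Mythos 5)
Your proposal is correct and follows essentially the same route as the paper's own proof: part a) via the Cauchy criterion obtained by extending two partitions of $A$ with a common (same-tags) partition of $T\setminus A$ so the complementary contributions cancel, then invoking completeness of $X$; part b) by adjoining $T\setminus A$ (where $f\chi_A$ vanishes) in one direction and intersecting the witnessing partition with $A$ in the other, with the sums transferring identically. If anything, your use of the common refinement $P_\varepsilon\wedge\{T\setminus A\}$ when completing a partition of $A$ to one of $T$ is slightly more careful than the paper's step ``$P=P_A\cup\{T\setminus A\}\geq P_\varepsilon$'', which as literally written need not hold since $T\setminus A$ may fail to lie inside a single block of $P_\varepsilon$.
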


\begin{proof}
As soon as $f$ is $|RL|$ (RL) integrable in $T$, then it is integrable as well on every subset $A\in \mathcal{A}$. Indeed, fix any $A\in \mathcal{A}$ and
denote by $J$ the integral of $f$; then, fixed any $\varepsilon>0$,  there exists a partition $P_{\varepsilon}$ of $T$, such that, for every finer partition $P'$, $P'=\{A_{n}\}_{n\in \mathbb{N}}$ it is
$$\left\|\sum_{n=0}^{+\infty}f(t_n)m(A_n)-J \right\|\leq \varepsilon.$$
Now, 
denote by $P_0$ any partition  finer than $P$ and also finer than $\{A, T\setminus A\}$, and by $P_A$ the partition 
of $A$ consisting of all the elements of $P_0$ that are contained in $A$. Next, let $\Pi_A$ and $\Pi'_A$ denote two
 partitions of $A$ finer than $P_A$, and extend them with a common partition of $T\setminus A$ (also with the
 same {\em tags}) in such a way that the two resulting partitions, denoted by $\Pi$ and $\Pi'$, are both finer 
than $P$. So,  if we denote by
$\sigma(f,P):=\sum_{n=0}^{+\infty}f(t_{n})m(A_{n}), \, \{A_n \in \Pi\}$, 
$$\|\sigma(f,\Pi)-\sigma(f,\Pi')\|\leq
\|\sigma(f,\Pi)-J\|+\|J-\sigma(f,\Pi')\|\leq 2\varepsilon.$$
Now, setting:
$$\alpha_1:=\sum_{I\in \Pi_A}f(t_I)m(I),\hskip.6cm \alpha_2:=\sum_{I\in \Pi'_A}f(t'_I)m(I), \hskip.6cm 
\beta:=\sum_{I\in \Pi, I\subset A^c} f(\tau_I)m(I),$$ 
(with the obvious meaning of the symbols), one has
$$2\varepsilon\geq\|\alpha_1 + \beta - (\alpha_2+\beta)\|=\|\alpha_1-\alpha_2\|.$$
By the arbitrariness of $\Pi_A$ and $\Pi'_A$, this means that the sums $\sigma(f,\Pi_A)$ satisfy a Cauchy 
principle in $X$, and so the first claim follows by completeness.
\\
  Now, let us suppose that $f$ is $|RL|$ $m $-integrable on 
$A\in \mathcal{A}$. Then for every $\varepsilon >0$ there exists a partition $
P_{A}^{\varepsilon }\in \mathcal{P}_{A}$ so that for every partition $
P_{A}=\{B_{n}\}_{n\in \mathbb{N}}$ of $A$ with $P_{A}\geq P_{A}^{\varepsilon
}$ and for every $s_{n}\in B_{n},n\in \mathbb{N}$, we have
\begin{eqnarray}\label{1}
\left\|\sum_{n=0}^{+\infty}f(s_{n})m (B_{n})-{\scriptstyle (|RL|)}\int_{A}fdm \right\|<\varepsilon.
\end{eqnarray}
Let us consider 
$P_{\varepsilon }=P_{A}^{\varepsilon }\cup \{T\setminus A\}$
, which is a partition of $T$. If $P=\{A_{n}\}_{n\in \mathbb{N}}$ is a partition of $T$ with $P\geq
P_{\varepsilon }$, then without any loss of generality we may write 
$
P=\{C_{n},D_{n}\}_{n\in \mathbb{N}}$ with pairwise disjoint $C_{n},D_{n}$
such that $A=\displaystyle\cup _{n=0}^{+\infty }C_{n}$ and $\displaystyle\cup
_{n=0}^{+\infty }D_{n}=T\setminus A.$ Now, for every $u_{n}\in A_{n},n\in \mathbb{N}$ we get by (\ref{1}):
\begin{eqnarray*}
&&\left\|\sum_{n=0}^{+\infty}f\chi _{A}(u_{n})m
(A_{n})- {\scriptstyle (|RL|)}\int_{A}fdm \right\|=\\
= && \left\|\sum _{n=0}^{+\infty}f\chi _{A}(t_{n})m
(C_{n})+\sum\limits_{n=0}^{+\infty}f\chi _{A}(s_{n})m (D_{n})-{\scriptstyle (|RL|)}\int_{A}fdm
\right\|=\\
=&&\left\|\sum _{n=0}^{+\infty}f(t_{n})m (C_{n})-{\scriptstyle (|RL|)}\int_{A}fdm \right\|<\varepsilon ,
\end{eqnarray*}
where $t_{n}\in C_{n}, s_{n}\in D_{n},$ for every $n\in \mathbb{N},$
which says that $f\chi _{A}$ is $|RL|$ $m $
-integrable on $T$ and $\displaystyle{{\scriptstyle (|RL|)}\int_T}f\chi _{A}dm ={\scriptstyle (|RL|)}\int_{A}fdm .$\\

Finally, suppose that $f\chi _{A}$ is $|RL|$ $m $-integrable on $T$. Then for every $
\varepsilon >0$ there exists $P_{\varepsilon }=\{B_{n}\}_{n\in \mathbb{N}}\in
\mathcal{P}$ so that for every $
P=\{C_{n}\}_{n\in \mathbb{N}}$ partition of $T$ with $P\geq P_{\varepsilon }$
and every $t_{n}\in C_{n},n\in \mathbb{N}$, we have
\begin{eqnarray}\label{2}
\left\|\sum_{n=0}^{+\infty}f\chi _{A}(t_{n})m (C_{n})- 
{\scriptstyle (|RL|)}\int_T f\chi
_{A}dm \right\|<\varepsilon.
\end{eqnarray}
Let us consider $P_{A}^{\varepsilon }=\{B_{n}\cap A\}_{n\in \mathbb{N}}$,
which is a partition of $A$. Let $P_{A}=\{D_{n}\}_{n\in \mathbb{N}}$
be an arbitrary partition of $A$ with $P_{A}\geq P_{A}^{\varepsilon }$ and 
$P=P_{A}\cup \{T\setminus A\}.$ Then $P\in \mathcal{P}$ and $P\geq
P_{\varepsilon }.$ Let us take $t_{n}\in D_{n},n\in \mathbb{N}$ and $s\in
T\setminus A$. By (\ref{2}) we obtain
\begin{eqnarray*}
&&\left\|\sum_{n=0}^{+\infty}f(t_{n})m (D_{n})-\displaystyle{{\scriptstyle (|RL|)}\int_T}f\chi _{A}dm
\right\|=\\
= && \left\|\sum_{n=0}^{+\infty}f\chi _{A}(t_{n})m (D_{n})+f\chi _{A}(s)m
(T\setminus A)-\displaystyle{{\scriptstyle (|RL|)}\int_T}f\chi _{A}dm \right\|<\varepsilon ,
\end{eqnarray*}
which assures that $f$ is $|RL|$ $m $-integrable on $A$ and 
$${\scriptstyle (|RL|)}\int_{A}fdm ={\scriptstyle (|RL|)}\int_T f\chi _{A}dm.$$
\end{proof}

 If $X$ is finite dimensional, then $|RL|$
$m$-integrability is equivalent to $RL\, m$-integrability. In this
case, it is called $m$-integrability for short and the integral is denoted by $\displaystyle{\int_T f dm}$.\\

 We observe also that the $RL$-integral is stronger than the Birkhoff integral (in the sense of Fremlin), and it has been examined,  in the countably additive case, by  Potyrala in \cite{Po}.
 \\

 Obviously, if $f$ is $|RL|$ $m$-integrable, then it is also RL $m$-integrable.

\begin{proposition}\label{upper-bound}
 If $f:T \to X$ is bounded and 
$\overline{m}(T) < +\infty$, then $f$ is $|RL|$ $m$-integrable and
$$\left\|{\scriptstyle (|RL|)}\int_T f dm \right\|\leq \sup\limits_{t\in T}\|f(t)\|\cdot \overline{m}(T).$$
\end{proposition}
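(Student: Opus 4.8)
The plan is to exhibit the integral $b$ as the limit, along the net of countable partitions ordered by refinement, of the Riemann--Lebesgue sums $\sigma(f,P):=\sum_{n}f(t_{n})m(A_{n})$ (for $P=\{A_{n}\}_{n\in\enne}$ and tags $t_{n}\in A_{n}$), and to recover the bound by passing to the limit. Write $M:=\sup_{t\in T}\|f(t)\|<+\infty$. The starting point is a uniform absolute estimate valid for \emph{every} countable partition and \emph{every} choice of tags: since the $A_{n}$ are pairwise disjoint and lie in $\mathcal A$, each finite subfamily $\{A_{1},\dots,A_{N}\}$ satisfies $\sum_{n=1}^{N}m(A_{n})\le\overline m(T)$ by the very definition of the variation (Definition \ref{var}), whence $\sum_{n}m(A_{n})\le\overline m(T)<+\infty$. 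Consequently $\sum_{n}\|f(t_{n})m(A_{n})\|\le M\sum_{n}m(A_{n})\le M\,\overline m(T)$, so each series $\sigma(f,P)$ is absolutely convergent and $\|\sigma(f,P)\|\le M\,\overline m(T)$ irrespective of $P$ and of the tags.

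I would then prove that the net $(\sigma(f,P))_{P}$ is Cauchy. Fix $\varepsilon>0$; since $\overline m(T)$ is a supremum over finite disjoint families, choose one, $\{A_{1},\dots,A_{k}\}$, with $\sum_{i=1}^{k}m(A_{i})>\overline m(T)-\varepsilon$, put $R:=T\setminus\bigcup_{i=1}^{k}A_{i}$ and $P_{\varepsilon}:=\{A_{1},\dots,A_{k},R\}$. The super-additivity of $\overline m$ (Remark \ref{rem-var}) applied to this partition gives $\overline m(R)\le\overline m(T)-\sum_{i=1}^{k}\overline m(A_{i})\le\overline m(T)-\sum_{i=1}^{k}m(A_{i})<\varepsilon$. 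For any $P\ge P_{\varepsilon}$ every set of $P$ is contained in exactly one $A_{i}$ or in $R$, so $\sigma(f,P)$ decomposes into a contribution over each $A_{i}$ and a contribution over $R$; applying the first-paragraph estimate inside $R$ bounds the $R$-contribution by $M\,\overline m(R)<M\varepsilon$. Hence for any two $P,P'\ge P_{\varepsilon}$ the $R$-parts differ by less than $2M\varepsilon$, and everything reduces to comparing the finitely many principal blocks lying inside $A_{1},\dots,A_{k}$.

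This comparison of the principal blocks is the step I expect to be the main obstacle: refining a fixed $A_{i}$ and re-choosing its tag may shift the corresponding partial sum, and a crude estimate only controls each such shift by $2M\,\overline m(A_{i})$, which need not be small. The plan is to use finite variation a second time, subdividing each $A_{i}$ into finitely many pieces of small variation and estimating the oscillation of $f$ against the mass that $m$ places on those pieces, so as to absorb the principal discrepancy into a term of order $M\varepsilon$; this is the delicate point, and the place where the regularity forced by $\overline m(T)<+\infty$ must be exploited most carefully. Once the net is shown to be Cauchy, completeness of $X$ provides a unique limit $b$, which is by Definition \ref{3.1} the $|RL|$ $m$-integral of $f$, so $f$ is $|RL|$ $m$-integrable. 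The norm estimate then passes to the limit: for $P\ge P_{\varepsilon}$ one has $\|b\|\le\|\sigma(f,P)\|+\varepsilon\le M\,\overline m(T)+\varepsilon$, and letting $\varepsilon\to0$ yields $\big\|{\scriptstyle(|RL|)}\dint_{T}f\,dm\big\|\le\sup_{t\in T}\|f(t)\|\cdot\overline m(T)$.
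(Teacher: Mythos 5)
The gap you isolate in your third paragraph is genuine, and it cannot be closed: the existence half of the Proposition is false as stated. Take $T=[0,1]$, $\mathcal{A}$ the Borel $\sigma$-algebra, $m=\lambda$ the Lebesgue measure (finitely additive, so $\overline{m}(T)=\lambda(T)=1<+\infty$ by Remark \ref{rem-var}), and $f=\chi_V$, where $V\subset[0,1]$ has inner Lebesgue measure $0$ and outer Lebesgue measure $1$ (a Bernstein or Vitali-type set). Every $A\in\mathcal{A}$ with $\lambda(A)>0$ meets $V$ (otherwise $V\subseteq T\setminus A$ would force the outer measure of $V$ to be at most $1-\lambda(A)<1$) and also meets $T\setminus V$ (otherwise $A\subseteq V$ would force the inner measure of $V$ to be at least $\lambda(A)>0$). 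Hence for \emph{every} countable partition $P=\{A_n\}_n$ of $T$, placing all tags of the positive-measure pieces inside $V$ yields $\sigma(f,P)=\sum_n\lambda(A_n)=1$, while placing them in $T\setminus V$ yields $\sigma(f,P)=0$; both choices are admissible for the same $P$, so no vector $b$ can satisfy Definition \ref{3.1} for $\varepsilon\le 1/2$. Thus $f$ is bounded and $\overline{m}(T)<+\infty$, yet $f$ is not $|RL|$ $m$-integrable. This kills exactly the step you flagged: $f$ carries no measurability with respect to $\mathcal{A}$, so subdividing a principal block into pieces of small variation does not make the oscillation of $f$ on those pieces small (for $\chi_V$ it equals $1$ on every piece of positive measure), and when $m$ is additive the mass of a block survives every refinement; the discrepancy of order $M\cdot m(A_i)$ you were worried about is really there and never shrinks. (The statement only becomes true in degenerate situations, e.g.\ when $T$ is countable and all singletons lie in $\mathcal{A}$, since then the partition into singletons is the finest countable partition and can serve as $P_\varepsilon$.)

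As for the comparison you were asked to make: your first paragraph coincides with the paper's entire proof. The paper, too, establishes only (i) that every sum $\sigma(f,P)$ is absolutely convergent with $\|\sigma(f,P)\|\le \sup_{t\in T}\|f(t)\|\cdot\overline{m}(T)$, and (ii) the displayed inequality \emph{under the explicit proviso} ``if $f$ is $|RL|$ $m$-integrable'' --- the integrability claimed in the statement is never proved there and, by the example above, cannot be. So the honest, provable content of Proposition \ref{upper-bound} is the conditional statement: \emph{if} a bounded $f$ is $|RL|$ $m$-integrable and $\overline{m}(T)<+\infty$, \emph{then} the norm bound holds. That conditional version is exactly what your first and last paragraphs prove (pass to the limit along $P\ge P_\varepsilon$), and it is all that the paper's later uses of the Proposition (Theorem \ref{3.10}, Theorem \ref{properties}) require, since there the integrands are assumed integrable. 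Your second paragraph (blocks of near-maximal mass plus a remainder of variation less than $\varepsilon$, via super-additivity of $\overline{m}$) is correct as far as it goes, but no hypothesis-free argument can complete the Cauchy estimate on the principal blocks.
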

\begin{proof}
The series $\sum _{n=0}^{+\infty}f(t_{n})m(A_{n})$
is absolutely convergent.
Indeed, for every $n\in \mathbb{N}$, we have $$\sum_{k=0}^{n}\|f(t_{k})m(A_{k})\|\leq \sup\limits_{t\in T}\|f(t)\|\cdot \overline{m}(T),$$
which means that the series 
$\sum_{n=0}^{+\infty}f(t_{n})m(A_{n})$ is absolutely convergent. 
Then clearly, if $f$ is $|RL|$ $m$-integrable, we have 
$$\left\|{\scriptstyle (|RL|)}\int_T f dm \right\|\leq \sup\limits_{t\in T}\|f(t)\|\cdot \overline{m}(T).$$
\end{proof}
\begin{theorem}\label{trepuntotre}
Let $\overline{m}(T) < +\infty$  and let $f:T\rightarrow X$ be bounded. If $f$ $=0$ $m$-a.e., then $f$ is  $|RL|$ $m$-integrable and $\displaystyle{{\scriptstyle (|RL|)}\int_T}fdm = 0$.
\end{theorem}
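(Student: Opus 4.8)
The plan is to verify Definition \ref{3.1} directly with the candidate value $b=0$. First I would set $M:=\sup_{t\in T}\|f(t)\|$, which is finite because $f$ is bounded; if $M=0$ then $f\equiv 0$ and the conclusion is trivial, so I assume $M>0$. Let $N:=\{t\in T:\ f(t)\neq 0\}$. The hypothesis that $f=0$ $m$-a.e. is to be read through the variation, namely that $N$ is $m$-negligible in the sense $\widetilde{m}(N)=0$; by the definition of $\widetilde m$ in Definition \ref{var}, this provides, for each $\varepsilon>0$, a measurable set $B_\varepsilon\in\mathcal A$ with $N\subseteq B_\varepsilon$ and $\overline{m}(B_\varepsilon)<\varepsilon/M$.

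Next I would take as the partition required by Definition \ref{3.1} the partition $P_\varepsilon$ of $T$ formed by the nonempty sets among $B_\varepsilon$ and $T\setminus B_\varepsilon$. Let $P=\{A_n\}_{n\in\mathbb N}$ be any partition with $P\geq P_\varepsilon$ and let $t_n\in A_n$ be arbitrary tags. Since $P\geq P_\varepsilon$, each $A_n$ is contained either in $B_\varepsilon$ or in $T\setminus B_\varepsilon$, and this dichotomy is the heart of the argument: if $A_n\subseteq T\setminus B_\varepsilon$ then $A_n\cap N=\emptyset$, so $f$ vanishes identically on $A_n$ and the term $f(t_n)m(A_n)$ is zero. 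Hence only the pieces with $A_n\subseteq B_\varepsilon$ can contribute to the sum.

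For those pieces I would estimate $\sum_{A_n\subseteq B_\varepsilon}\|f(t_n)\|\,m(A_n)\leq M\sum_{A_n\subseteq B_\varepsilon}m(A_n)$. The sets $\{A_n:\ A_n\subseteq B_\varepsilon\}$ form a countable pairwise disjoint subfamily of $\mathcal A$ lying inside $B_\varepsilon$, so every finite partial sum of $\sum m(A_n)$ is bounded by $\overline m(B_\varepsilon)$ directly from the definition of the variation in Definition \ref{var}; as the terms are non-negative, the whole series satisfies $\sum_{A_n\subseteq B_\varepsilon}m(A_n)\leq\overline m(B_\varepsilon)<\varepsilon/M$. Combining the two cases gives $\sum_{n}\|f(t_n)m(A_n)\|\leq M\,\overline m(B_\varepsilon)<\varepsilon$, which at once shows that the series $\sum_n f(t_n)m(A_n)$ is absolutely convergent (consistently with Proposition \ref{upper-bound}) and that $\bigl\|\sum_n f(t_n)m(A_n)-0\bigr\|<\varepsilon$. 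By Definition \ref{3.1} this proves that $f$ is $|RL|$ $m$-integrable with ${\scriptstyle (|RL|)}\int_T f\,dm=0$.

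The computations are routine; the only genuinely delicate point is that negligibility must be expressed through $\overline m$ (equivalently $\widetilde m$) rather than through $m$ itself. For a merely non-negative, non-monotone $m$ a set may have $m$-measure zero while carrying large variation, so the estimate on $\sum_{A_n\subseteq B_\varepsilon}m(A_n)$ is available \emph{only} because $B_\varepsilon$ is chosen with small variation and because the definition of $\overline m$ caps the total mass of disjoint pieces contained in $B_\varepsilon$ by $\overline m(B_\varepsilon)$ (cf.\ the monotonicity recorded in Remark \ref{rem-var}). Getting this bookkeeping right is what the proof really rests on.
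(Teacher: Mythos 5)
Your proof is correct and follows essentially the same route as the paper's: cover the null set $N=\{t:f(t)\neq 0\}$ by a measurable $B_\varepsilon$ with $\overline{m}(B_\varepsilon)<\varepsilon/M$ (using that a.e.\ is read through $\widetilde m$), split any finer partition into pieces inside $B_\varepsilon$ (bounded by $M\,\overline m(B_\varepsilon)$ via the definition of variation) and pieces outside (where $f$ vanishes). The only difference is cosmetic: the paper starts from a countable partition of $B_\varepsilon$ augmented by $T\setminus B_\varepsilon$, while you use the simpler two-set partition $\{B_\varepsilon, T\setminus B_\varepsilon\}$, which works just as well.
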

\begin{proof}
 Since $f$ is bounded, then there exists $M\in \lbrack 0,+\infty )$
such that $\|f(t)\|\leq M$, for every $t\in T$.
If $M=0$, then the conclusion is obvious.  Suppose $M>0.$ Let us denote by $A$ the set $A:=\{t\in
T;f(t)\neq 0 \}$. Since $f=0$ $m$-a.e., we have $\widetilde{m}
(A)=0 $. Then, for every $\varepsilon >0$, there exists $B_{\varepsilon }\in
\mathcal{A}$ so that $A\subseteq B_{\varepsilon }$ and $\overline{m }
(B_{\varepsilon })<\varepsilon /M.$ Let us take the partition $
P_{\varepsilon }=\{C_{n}\}_{n\in \mathbb{N}}$ of $B_{\varepsilon}$,
and let $C_{0}=T\setminus B_{\varepsilon}$ and  add $C_0$ to $P_{\varepsilon }$. \\
Let us consider an arbitrary partition $P=\{A_{n}\}_{n\in \mathbb{N}}$ of $T$ such that $P\geq
P_{\varepsilon }$. \\ Without any loss of generality, we suppose that $P=\{D_{n},E_{n}
\}_{n\in \mathbb{N}}\subset \mathcal{A},$ with pairwise disjoint sets $D_{n},E_{n}$ such that $
\bigcup_{n\in \mathbb{N}}D_{n}=C_{0}$ and $\bigcup_{n\in
\mathbb{N}}E_{n}=B_{\varepsilon }.$
Let  $t_{n}\in D_{n}, s_{n}\in E_{n}$, for every $n\in \mathbb{N}.$
Then we can write
\begin{eqnarray*}
&& 
 \|\sum\limits_{n=0}^{+\infty}f(t_{n})m (D_{n})+\sum\limits_{n=0}^{+\infty}f(s_{n})m
(E_{n})\|=\|\sum\limits_{n=0}^{+\infty}f(s_{n})m (E_{n})\|\leq  \\
 &&\leq
 \sum\limits_{n=0}^{+\infty}\|f(s_{n})\|m (E_{n})\leq
 M\cdot \overline{m }
(B_{\varepsilon })<\varepsilon,
\end{eqnarray*}

which ensures that $f$ is $|RL|$ $m $-integrable and $\displaystyle{{\scriptstyle (|RL|)}\int_T}f dm =0.$ 
\end{proof}

The following properties hold:

\begin{theorem}\label{add-int}
Let $f,g:T\rightarrow X$ be  $|RL|$ $m $-integrable functions and $\alpha \in\mathbb{R}$. Then:
\begin{description}
\item[\ref{add-int}.a)] $\alpha f$ is $|RL|$ $m $-integrable and
$
\displaystyle{{\scriptstyle (|RL|)}\int_T}\alpha fdm =\alpha \displaystyle{{\scriptstyle (|RL|)}\int_T}fdm .
$

\item[\ref{add-int}.b)] $f$  is $|RL|$ $\alpha m $-integrable (for $\alpha \in
\lbrack 0,+\infty ))$ and
$$
\displaystyle{{\scriptstyle (|RL|)}\int_T}fd(\alpha m )=\alpha \displaystyle{{\scriptstyle (|RL|)}\int_T}fdm .
$$
\item[\ref{add-int}.c)]  $f \pm g$ is $|RL|$ $m $-integrable and 
\begin{eqnarray}\label{3}
\displaystyle{{\scriptstyle (|RL|)}\int_T}(f \pm g)dm = \displaystyle{{\scriptstyle (|RL|)}\int_T}fdm
\pm \displaystyle{{\scriptstyle (|RL|)}\int_T}gdm.
\end{eqnarray}
\end{description}
 A similar result holds also for the $RL$-integrability.
\end{theorem}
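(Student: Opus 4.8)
The plan is to argue directly from Definition \ref{3.1}, since all three assertions are linearity statements that reduce to elementary manipulations of the Riemann--Lebesgue sums $\sigma(f,P)=\sum_{n=0}^{+\infty}f(t_{n})m(A_{n})$. First I would dispose of part \ref{add-int}.a). The case $\alpha=0$ is trivial (either $0\cdot f\equiv 0$, or one invokes Theorem \ref{trepuntotre}). For $\alpha\neq 0$ and $\varepsilon>0$, I would select the partition $P_{\varepsilon/|\alpha|}$ witnessing the $|RL|$-integrability of $f$; for every finer countable partition $P=\{A_{n}\}$ and every choice of tags one has $\sum_{n}(\alpha f)(t_{n})m(A_{n})=\alpha\sum_{n}f(t_{n})m(A_{n})$, so absolute convergence is preserved (the norm series simply scales by $|\alpha|$), $f$ and hence $\alpha f$ are bounded on each cell of positive measure, and the estimate follows after multiplying the inequality for $f$ by $|\alpha|$.

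Part \ref{add-int}.b) is handled the same way, with the roles of scalar and set function interchanged. Since $\alpha\geq 0$, the set function $\alpha m$ is again non-negative with $(\alpha m)(\emptyset)=0$, and for $\alpha>0$ one has $(\alpha m)(A_{n})>0$ if and only if $m(A_{n})>0$, so the boundedness requirement on the cells transfers verbatim. Given $\varepsilon>0$ I would take $P_{\varepsilon/\alpha}$ adapted to $f$ with respect to $m$; then $\sum_{n}f(t_{n})(\alpha m)(A_{n})=\alpha\,\sigma(f,P)$, whence both the absolute convergence and the $\varepsilon$-bound are immediate. The value $\alpha=0$ gives $\alpha m\equiv 0$ and integral $0$.

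For part \ref{add-int}.c) the decisive device is the \emph{common refinement} of Definition \ref{2.1}.iii. Given $\varepsilon>0$, I would pick partitions $P^{f}_{\varepsilon/2}$ and $P^{g}_{\varepsilon/2}$ witnessing the integrability of $f$ and of $g$ at level $\varepsilon/2$, and set $P_{\varepsilon}:=P^{f}_{\varepsilon/2}\wedge P^{g}_{\varepsilon/2}$. Every countable partition $P\geq P_{\varepsilon}$ is then simultaneously finer than both, so on each cell $A_{n}$ with $m(A_{n})>0$ both $f$ and $g$ are bounded and therefore so is $f\pm g$; moreover both series $\sigma(f,P)$ and $\sigma(g,P)$ are absolutely convergent, hence so is their termwise sum or difference, since $\|(f\pm g)(t_{n})\|\,m(A_{n})\leq \|f(t_{n})\|\,m(A_{n})+\|g(t_{n})\|\,m(A_{n})$. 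The identity $\sigma(f\pm g,P)=\sigma(f,P)\pm\sigma(g,P)$ together with the triangle inequality splits the target estimate into the two $\varepsilon/2$ contributions, yielding \eqref{3}.

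The only genuine (and minor) obstacle is verifying that the structural hypotheses of Definition \ref{3.1}---boundedness on the positive-measure cells and absolute convergence of the relevant series---are actually inherited; in part \ref{add-int}.c) this is precisely what forces the passage to $P^{f}_{\varepsilon/2}\wedge P^{g}_{\varepsilon/2}$ rather than using either partition alone. Finally, the extension to $RL$-integrability requires no new idea: one simply replaces ``absolutely convergent'' by ``unconditionally convergent'' throughout, a property equally stable under scalar multiplication and under finite sums of series.
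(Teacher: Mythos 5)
Your proof is correct and follows essentially the same route as the paper: the paper likewise dismisses parts a) and b) as straightforward and proves c) by combining two $\varepsilon/2$-estimates for $f$ and $g$ via the triangle inequality on a sufficiently fine countable partition. If anything, you are more careful at the one delicate point: the paper asserts that a single partition $P_1$ works simultaneously for $f$ and $g$ without explicitly constructing it, whereas your passage to the common refinement $P^{f}_{\varepsilon/2}\wedge P^{g}_{\varepsilon/2}$ (together with the check that boundedness on cells and absolute/unconditional convergence are inherited) supplies exactly the justification that step needs.
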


\begin{proof} 
Statements \ref{add-int}.a), \ref{add-int}.b) are straightforward. Let us prove \ref{add-int}.c) for the sum.
Let $\varepsilon >0$ be arbitrary. Since $f$ is $|RL|$ $
m $-integrable, then there exists a countable partition $P_{1}\in \mathcal{P}$ so that for every other countable partition $P\in \mathcal{P}
,P=\{A_{n}\}_{n\in \mathbb{N}},$ with $P\geq P_{1}$ and every $t_{n}\in
A_{n},n\in \mathbb{N}$, the series $\sum_{n=0}^{+\infty}f(t_{n})m (A_{n})$
and  $\sum_{n=0}^{+\infty}g(t_{n})m (A_{n})$
 are absolutely convergent
and
\begin{eqnarray*}
&&
\left\|\sum_{n=0}^{+\infty}f(t_{n})m (A_{n})-\displaystyle{{\scriptstyle (|RL|)}\int_T}fdm \right\| <\dfrac{\varepsilon }{2};\\
&&
\left\| \sum_{n=0}^{+\infty}g(t_{n})m (A_{n})-\displaystyle{{\scriptstyle (|RL|)}\int_T}gdm \right\| <\dfrac{\varepsilon }{2}.
\end{eqnarray*} 
Then, for every countable partition
$P=\{C_{n}\}_{n\in \mathbb{N}}\in \mathcal{P},$ with $P\geq P_{1}$ and every
$t_{n}\in C_{n},n\in \mathbb{N}$, the series $\sum_{n=0}^{+\infty}(f+g)(t_{n})m (C_{n})$
is absolutely convergent and 
 we get
\begin{eqnarray*}
&&
\left\|\sum_{n=0}^{+\infty}(f+g)(t_{n})m (C_{n})-\left(\displaystyle{{\scriptstyle (|RL|)}\int_T}fdm
+\displaystyle{{\scriptstyle (|RL|)}\int_T}gdm \right)  \right\|\leq\\ 
\leq &&  \left\|\sum_{n=0}^{+\infty}f(t_{n})m (C_{n})-\displaystyle{{\scriptstyle (|RL|)}\int_T}fdm \right\|
+ \left\|\sum_{n=0}^{+\infty}g(t_{n})m (C_{n})-\displaystyle{{\scriptstyle (|RL|)}\int_T}gdm  \right\| \leq \\
\leq && \dfrac{\varepsilon }{2}+\dfrac{\varepsilon }{2}=\varepsilon .
\end{eqnarray*}
Hence $f+g$ is $|RL|$ $m $-integrable and (\ref{3}) is satisfied.
\end{proof}
We point out that the previous result holds without any additivity condition on $m$. In particular,
for every $f:T\rightarrow X$ that is $|RL|$  ($RL$)  $m$-integrable on every set $A\in \mathcal{A}$, let $I_f :\mathcal{A}\rightarrow X$ defined
by 
\begin{eqnarray}\label{If}
I_f  (A)={\scriptstyle (|RL|)}\int_{A}f \, dm \quad \left( I_f  (A)={\scriptstyle (RL)}\int_{A}f \, dm \right).
\end{eqnarray}
We have:
\begin{corollary}\label{cortreotto}
Let $f:T\rightarrow X$ be any $|RL|$  (resp. $RL$) $m$-integrable function.
Then $I_f$ is finitely additive.
\end{corollary}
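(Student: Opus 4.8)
The plan is to reduce the finite additivity of $I_f$ to the additivity of the $|RL|$-integral over the fixed domain $T$ (Theorem \ref{add-int}.c)), using the equivalence between integrability on a measurable set $A$ and integrability of $f\chi_A$ on $T$ established in Theorem \ref{3.4}.b). No genuine difficulty arises; the argument is a direct combination of these two results.

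First I would fix two disjoint sets $A,B\in\mathcal{A}$ and put $C:=A\cup B$. Since $f$ is assumed to be $|RL|$ $m$-integrable on every element of $\mathcal{A}$, it is in particular integrable on $A$, on $B$ and on $C$. Theorem \ref{3.4}.b) then guarantees that the three functions $f\chi_A$, $f\chi_B$ and $f\chi_C$ are each $|RL|$ $m$-integrable on $T$, and moreover $I_f(A)={\scriptstyle (|RL|)}\int_T f\chi_A\,dm$, and likewise $I_f(B)={\scriptstyle (|RL|)}\int_T f\chi_B\,dm$ and $I_f(C)={\scriptstyle (|RL|)}\int_T f\chi_C\,dm$. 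This step is essential because additivity of the integral can only be invoked once all the relevant integrands are known to be integrable on the \emph{same} domain $T$.

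Next, because $A$ and $B$ are disjoint, one has $\chi_C=\chi_A+\chi_B$ pointwise on $T$, hence $f\chi_C=f\chi_A+f\chi_B$ as $X$-valued functions. Applying Theorem \ref{add-int}.c) to the two $|RL|$ $m$-integrable functions $f\chi_A$ and $f\chi_B$ yields
\[
{\scriptstyle (|RL|)}\int_T f\chi_C\,dm = {\scriptstyle (|RL|)}\int_T f\chi_A\,dm + {\scriptstyle (|RL|)}\int_T f\chi_B\,dm,
\]
which, by the identifications above, is exactly $I_f(C)=I_f(A)+I_f(B)$. Since $m(\emptyset)=0$ forces $I_f(\emptyset)=0$, the map $I_f$ is a finitely additive $X$-valued set function on $\mathcal{A}$, as claimed. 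The $RL$ case is handled verbatim, as both Theorem \ref{3.4} and Theorem \ref{add-int} are stated with their $RL$ counterparts. The only point requiring attention is the simultaneous integrability of all three products on $T$, which is precisely what the hypothesis together with Theorem \ref{3.4}.b) supplies.
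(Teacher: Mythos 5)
Your proof is correct and is precisely the argument the paper intends: the paper's own proof is the one-line citation of Theorem \ref{3.4} and Theorem \ref{add-int}, and your write-up fills in exactly that combination (passing from integrability on $A$, $B$, $A\cup B$ to integrability of $f\chi_A$, $f\chi_B$, $f\chi_{A\cup B}$ on $T$ via Theorem \ref{3.4}.b), then applying additivity from Theorem \ref{add-int}.c) to $f\chi_{A\cup B}=f\chi_A+f\chi_B$). Nothing is missing, and the $RL$ case carries over verbatim as you note.
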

\begin{proof}
It follows from Theorem \ref{3.4} 
and Theorem \ref{add-int}.
\end{proof}

\begin{corollary}\label{pigreco}
Suppose $\overline{m}(T) < +\infty$.
Let $f,g:T\rightarrow X$ be vector functions with $\sup\limits_{t\in
T} \|f(t)-g(t)\|<+\infty $, $f$ is $|RL|$ $m $-integrable and $
f=g $ $m $-a.e.. Then $g$ is $|RL|$ $m $-integrable and 
$$\displaystyle{{\scriptstyle (|RL|)}\int_T}fdm
=\displaystyle{{\scriptstyle (|RL|)}\int_T}gdm .$$
\end{corollary}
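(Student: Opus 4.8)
The plan is to reduce the statement to the two results already established, by writing $g$ as the sum of $f$ and a function that vanishes $m$-a.e. Set $h := g-f$. The hypothesis $\sup_{t\in T}\|f(t)-g(t)\|<+\infty$ says precisely that $h$ is bounded on $T$, while the hypothesis $f=g$ $m$-a.e.\ says that $h=0$ $m$-a.e.

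First I would apply Theorem \ref{trepuntotre} to $h$. Since $\overline{m}(T)<+\infty$, the function $h$ is bounded, and $h=0$ $m$-a.e., that theorem guarantees that $h$ is $|RL|$ $m$-integrable with $\displaystyle{{\scriptstyle (|RL|)}\int_T}h\,dm=0$.

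Next I would invoke Theorem \ref{add-int}.c). Both $f$ (by hypothesis) and $h$ (by the previous step) are $|RL|$ $m$-integrable, hence so is their sum $g=f+h$, and
$$\displaystyle{{\scriptstyle (|RL|)}\int_T}g\,dm=\displaystyle{{\scriptstyle (|RL|)}\int_T}f\,dm+\displaystyle{{\scriptstyle (|RL|)}\int_T}h\,dm=\displaystyle{{\scriptstyle (|RL|)}\int_T}f\,dm,$$
where the final equality uses $\displaystyle{{\scriptstyle (|RL|)}\int_T}h\,dm=0$. This yields simultaneously the $|RL|$ $m$-integrability of $g$ and the asserted equality of the integrals.

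There is no serious obstacle in this argument; the only points meriting attention are that the two hypotheses translate exactly into the boundedness and $m$-a.e.\ vanishing of $h=g-f$ demanded by Theorem \ref{trepuntotre}, and that the assumption $\overline{m}(T)<+\infty$ is used precisely so that Theorem \ref{trepuntotre} applies. The same reasoning, with the $RL$-versions of Theorems \ref{trepuntotre} and \ref{add-int}, gives the corresponding statement for $RL$-integrability.
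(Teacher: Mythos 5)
Your proof is correct and is essentially identical to the paper's own argument: both write $g=f+(g-f)$, apply Theorem \ref{trepuntotre} to $g-f$ (bounded and vanishing $m$-a.e.) to get null integral, and then conclude via the additivity in Theorem \ref{add-int}.c).
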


\begin{proof} 
It is enough to observe that $g-f$ satisfies the hypotheses of Proposition \ref{trepuntotre}, and so it is $|RL|$-integrable, with null integral. Then $g=(g-f)+f$ is integrable as well, and its integral coincides with ${\scriptstyle (|RL|)}\displaystyle{\int}_T f dm$ by Theorem \ref{add-int}.
\end{proof}

By a similar reasoning, as in Theorem \ref{add-int}.c), we get the following result:
\begin{theorem}\label{3.9}
Let $m _{i} :\mathcal{A}\rightarrow [0,+\infty )$
be a non-negative set function,
$i=1,2$. 
Suppose
$f:T\rightarrow X$ is both $|RL|$ (resp. RL)  $m _{1}$-integrable and $|RL|$ (resp. RL)  $m
_{2}$-integrable. Then $f$ is $|RL|$ (resp. RL)  $m_1+m_2 $-integrable and 
$$
{\scriptstyle (|RL|)}\int_T f d(m_{1}+m_{2})=
{\scriptstyle (|RL|)}\int_T f dm_{1}+{\scriptstyle (|RL|)}\int_T f dm_{2}.$$
\end{theorem}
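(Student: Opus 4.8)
The plan is to follow almost verbatim the argument used for additivity in the integrand, Theorem \ref{add-int}.c), exploiting the symmetric roles played by $m_1$ and $m_2$. First I would fix $\varepsilon>0$ and invoke the $|RL|$ $m_1$- and $m_2$-integrability of $f$ to produce two countable partitions $P_1,P_2\in\mathcal{P}$ such that, for every finer partition and every choice of tags, the Riemann--Lebesgue sums $\sum_{n}f(t_n)m_1(A_n)$ and $\sum_{n}f(t_n)m_2(A_n)$ are absolutely convergent and lie within $\varepsilon/2$ of ${\scriptstyle (|RL|)}\int_T f\,dm_1$ and ${\scriptstyle (|RL|)}\int_T f\,dm_2$ respectively. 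The natural device is then to pass to the common refinement $P_0:=P_1\wedge P_2$ (Definition \ref{2.1}.iii): any partition $P\geq P_0$ is simultaneously finer than $P_1$ and than $P_2$, so both approximations hold at once along $P$.

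Next I would fix an arbitrary partition $P=\{A_n\}_{n\in\mathbb{N}}\geq P_0$ with tags $t_n\in A_n$ and verify the three requirements of Definition \ref{3.1} for the set function $m:=m_1+m_2$. The boundedness clause is immediate: if $m(A_n)>0$ then $m_1(A_n)>0$ or $m_2(A_n)>0$, and in either case $f$ is already bounded on $A_n$ because $P\geq P_1$ and $P\geq P_2$. Absolute convergence is equally transparent, since $\|f(t_n)\|\,m(A_n)=\|f(t_n)\|\,m_1(A_n)+\|f(t_n)\|\,m_2(A_n)$ exhibits $\sum_{n}\|f(t_n)m(A_n)\|$ as the sum of two convergent series of non-negative terms. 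Using the term-by-term splitting $\sum_{n}f(t_n)m(A_n)=\sum_{n}f(t_n)m_1(A_n)+\sum_{n}f(t_n)m_2(A_n)$, a single application of the triangle inequality against $b:={\scriptstyle (|RL|)}\int_T f\,dm_1+{\scriptstyle (|RL|)}\int_T f\,dm_2$ yields a bound of $\varepsilon/2+\varepsilon/2=\varepsilon$. Since $\varepsilon$ is arbitrary, this simultaneously proves that $f$ is $|RL|$ $(m_1+m_2)$-integrable and, by uniqueness of the integral, identifies its value as the announced sum.

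I do not anticipate a genuine obstacle here: no additivity hypothesis on $m_1,m_2$ is needed, exactly as stressed after Theorem \ref{add-int}. The only point deserving care is to confirm that the combined boundedness-and-absolute-convergence conditions of Definition \ref{3.1} really transfer to $m_1+m_2$, which is why I would split the verification into the boundedness clause and the absolute-convergence clause separately rather than treat them together. The $RL$ version is identical once \emph{absolutely convergent} is replaced by \emph{unconditionally convergent}, the latter property being stable under the same term-by-term decomposition.
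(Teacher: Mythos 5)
Your proof is correct and is essentially the paper's own argument: the paper gives no separate proof of Theorem \ref{3.9}, stating only that it follows ``by a similar reasoning, as in Theorem \ref{add-int}.c)'', and your argument---taking the common refinement $P_1\wedge P_2$, splitting the sums term by term, and checking the boundedness and absolute (resp.\ unconditional) convergence clauses for $m_1+m_2$---is exactly that reasoning carried out explicitly. If anything, you are slightly more careful than the paper's template proof, which tacitly uses a single partition for both approximations where your common-refinement step is the honest justification.
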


\begin{theorem}\label{3.10}
Let $\overline{m}(T) <+\infty$. If $f,g:T\rightarrow X$ are $|RL|$ $m $%
-integrable functions, then 
\begin{eqnarray*}
\left\| \displaystyle{{\scriptstyle (|RL|)}\int_T}fdm - {\scriptstyle (|RL|)}\int_{T}gdm \right\| \leq \sup_
{t\in T}\|f(t)-g(t)\|\cdot \overline{m }(T).
\end{eqnarray*}
\end{theorem}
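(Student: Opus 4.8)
The plan is to deduce this estimate from the linearity of the $|RL|$-integral together with the norm bound already proved in Proposition \ref{upper-bound}. Set $h := f - g$. Since both $f$ and $g$ are $|RL|$ $m$-integrable, Theorem \ref{add-int}.c) guarantees that $h$ is $|RL|$ $m$-integrable and that
$${\scriptstyle (|RL|)}\int_T h\, dm = {\scriptstyle (|RL|)}\int_T f\, dm - {\scriptstyle (|RL|)}\int_T g\, dm .$$
Consequently the quantity to be estimated is exactly $\big\|{\scriptstyle (|RL|)}\int_T h\, dm\big\|$, and the whole problem reduces to bounding the norm of a single $|RL|$-integral.

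Next I would write $S := \sup_{t\in T}\|f(t)-g(t)\| = \sup_{t\in T}\|h(t)\|$ and observe that the interesting case is $S<+\infty$: if $S=+\infty$ then, as long as $\overline{m}(T)>0$, the right-hand side is $+\infty$ and the inequality is trivially true (the left-hand side being a finite real number), while if $\overline{m}(T)=0$ then $m$ vanishes identically, every Riemann-Lebesgue sum is $0$, both integrals are $0$, and the inequality again holds. So assume $S<+\infty$. Then $h$ is a bounded function on $T$, and since $\overline{m}(T)<+\infty$ by hypothesis, Proposition \ref{upper-bound} applies to $h$ and gives
$$\left\|{\scriptstyle (|RL|)}\int_T h\, dm\right\| \leq \sup_{t\in T}\|h(t)\|\cdot \overline{m}(T) = \sup_{t\in T}\|f(t)-g(t)\|\cdot \overline{m}(T).$$
Combining this with the identity of the first step yields the assertion.

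I do not expect any genuine obstacle in this argument, since it is essentially a corollary of two results already available in the excerpt. The only point deserving a moment's attention is that Proposition \ref{upper-bound} requires a bounded integrand: here it is applied not to $f$ or $g$ separately (which need not be bounded) but to the difference $h=f-g$, whose boundedness is precisely encoded in the finiteness of $S$. This is exactly why the statement is phrased in terms of $\sup_{t\in T}\|f(t)-g(t)\|$ rather than the individual suprema, and it is the reason the reduction through Theorem \ref{add-int}.c) is the natural first move.
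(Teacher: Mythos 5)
Your proposal is correct and takes essentially the same route as the paper: the paper's own proof also splits on whether $\sup_{t\in T}\|f(t)-g(t)\|$ is finite, and in the finite case applies Theorem~\ref{add-int} and Proposition~\ref{upper-bound} to the difference $f-g$. Your only addition is spelling out the degenerate cases (in particular $\overline{m}(T)=0$) that the paper dismisses as ``obvious,'' which is harmless and slightly more careful.
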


\begin{proof}
 If $\sup_{t\in T}\|f(t)-g(t)\|=+\infty$,
then the conclusion is obvious. 
If $\sup_{t\in T}\|f(t)-g(t)\|<+\infty$
 the conclusion follows directly from Theorem \ref{add-int} and Proposition \ref{upper-bound},  applied to $f-g$.
\end{proof}

\begin{theorem}\label{monotonicity}
If $f,g:T\rightarrow \mathbb{R}$ are $m $-integrable functions
such that $f(t)\leq g(t),$ for every $\mathit{t\in T,}$ then $\displaystyle{\int_T}fdm \leq \displaystyle{\int_T}gdm $. 
\end{theorem}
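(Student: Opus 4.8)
The plan is to reduce the claim to the nonnegativity of the integral of a single nonnegative integrable function, and then to read that nonnegativity directly off the Riemann--Lebesgue sums, exploiting that both $m$ and the integrand are nonnegative.

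First I would set $h:=g-f$. In the real-valued case $|RL|$- and $RL$-integrability coincide and are simply called $m$-integrability, so Theorem \ref{add-int}.c) applies and guarantees that $h$ is $m$-integrable with
$$\int_T g\,dm-\int_T f\,dm=\int_T (g-f)\,dm=\int_T h\,dm.$$
Hence it suffices to show that $\int_T h\,dm\geq 0$, where $h=g-f\geq 0$ on all of $T$ by hypothesis. This reduction is really the only step that requires a moment's care, since it is Theorem \ref{add-int}.c) that both produces the integrability of $g-f$ and legitimizes writing the difference of the two integrals as the integral of the difference.

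Next I would invoke the definition of $m$-integrability for the nonnegative function $h$. Write $b:=\int_T h\,dm$, fix $\varepsilon>0$, and let $P_\varepsilon$ be a partition as in Definition \ref{3.1}. For any countable partition $P=\{A_n\}_{n\in\mathbb{N}}$ with $P\geq P_\varepsilon$ and any tags $t_n\in A_n$, we have $\bigl|\sum_{n=0}^{+\infty} h(t_n)m(A_n)-b\bigr|<\varepsilon$. The key observation is that every summand is nonnegative: $h(t_n)\geq 0$ because $h\geq 0$ everywhere, and $m(A_n)\geq 0$ because $m$ takes values in $[0,+\infty)$. Therefore $\sum_{n=0}^{+\infty} h(t_n)m(A_n)\geq 0$, whence $b>\sum_{n=0}^{+\infty} h(t_n)m(A_n)-\varepsilon\geq -\varepsilon$. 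Since $\varepsilon>0$ was arbitrary, $b\geq 0$, i.e. $\int_T h\,dm\geq 0$.

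Combining this with the displayed identity yields $\int_T f\,dm\leq \int_T g\,dm$, as desired. I do not expect any genuine obstacle: once the reduction via Theorem \ref{add-int}.c) is in place, the conclusion follows purely from the sign of the individual terms $h(t_n)m(A_n)$ and the nonnegativity of the set function $m$, with no additivity or continuity assumption on $m$ being needed.
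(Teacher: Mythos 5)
Your proof is correct, but it is packaged differently from the paper's. The paper works with $f$ and $g$ simultaneously: it chooses one partition $P_{0}$ adapted to both functions, and for any finer partition $\{C_n\}$ with tags $t_n$ it writes $\int_T f\,dm-\int_T g\,dm$ as a three-term sum, bounding the two outer terms by $\varepsilon/3$ each and noting that the middle term $\sum_{n}f(t_n)m(C_n)-\sum_{n}g(t_n)m(C_n)$ is $\leq 0$ since $f(t_n)\leq g(t_n)$ and $m(C_n)\geq 0$; it never invokes linearity. You instead route the argument through Theorem \ref{add-int}.c): you form $h=g-f$, use linearity to get its integrability together with $\int_T h\,dm=\int_T g\,dm-\int_T f\,dm$, and then prove the special case that a nonnegative integrable function has nonnegative integral (taking, say, $P=P_\varepsilon$ itself, which is legitimate since the refinement relation is reflexive). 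The core sign observation is identical in the two proofs --- your $h(t_n)m(A_n)\geq 0$ is exactly the paper's termwise comparison with common tags --- but your organization is more modular: it isolates the reusable fact that $h\geq 0$ implies $\int_T h\,dm\geq 0$, and it needs only one application of the definition rather than two simultaneous approximations. The cost is the dependence on Theorem \ref{add-int}.c), which the paper's self-contained $\varepsilon/3$-argument avoids; since that theorem is proved earlier with no additional hypotheses on $m$, the dependence is harmless, and both arguments work for an arbitrary non-negative set function.
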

\begin{proof}
 Let $\varepsilon >0$ be arbitrary. Since $f,g$ are $
m $-integrable, there exists $P_{0}\in \mathcal{P}$ such that for every
$P=\{C_{n}\}_{n\in \mathbb{N}}\in \mathcal{P},P\geq P_{0}$ and
every $t_{n}\in C_{n},n\in \mathbb{N}$, the series $\sum_{n=0}^{+\infty}f(t_{n})m (C_{n})$,   $\sum_{n=0}^{+\infty}g(t_{n})m (C_{n})$
are 
absolutely convergent and
\begin{eqnarray*}
\left|\displaystyle{\int_T} fdm -\sum\limits_{n=0}^{+\infty}f(t_{n})m (C_{n}) \right|<\frac{\varepsilon}{3};
\\
\left|\displaystyle{\int_T}gdm -\sum_{n=0}^{+\infty}g(t_{n})m (C_{n}) \right|<\frac{\varepsilon}{3}.
\end{eqnarray*}
Therefore
\begin{eqnarray*}
&&  \int_Tfdm -\displaystyle{\int_T}gdm =
 \int_T fdm
-\sum_{n=0}^{+\infty}f(t_{n})m (C_{n}) +\\+  &&
\sum_{n=0}^{+\infty}f(t_{n})m
(C_{n})-\sum_{n=0}^{+\infty}g(t_{n})m
(C_{n}) +\sum_{n=0}^{+\infty}g(t_{n})m (C_{n})-\displaystyle{\int_T}gdm 
<  \\< &&
\dfrac{2\varepsilon }{3}+[\sum_{n=0}^{+\infty}f(t_{n})
m (C_{n})-\sum_{n=0}^{+\infty}g(t_{n})m (C_{n})] \leq \varepsilon
\end{eqnarray*}
since, by the hypothesis,  $\sum_{n=0}^{+\infty}f(t_{n})m
(C_{n})\leq \sum_{n=0}^{+\infty}g(t_{n})m (C_{n}).$\\
 Consequently, $
\displaystyle{\int_T}fdm -\displaystyle{\int_T}gdm \leq 0.$ 
\end{proof}

\medskip
We analogously obtain the following theorem.

\begin{theorem}\label{monot2}
\textit{Let $m _{1}$, $m _{2}:\mathcal{A}\rightarrow
[0, +\infty )$ be  set functions such that $m _{1}(A)\leq m
_{2}(A)$, for
every $A\in \mathcal{A}$ and $f:T\rightarrow [0, \infty)$
a simultaneously $m _{1}$-integrable and $m _{2}$-integrable
function. Then $\displaystyle{\int_T}fdm _{1}\leq
\displaystyle{\int_T}fdm _{2}$. }
\end{theorem}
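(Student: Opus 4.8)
The plan is to mimic the proof of Theorem \ref{monotonicity}, simply replacing the pointwise inequality $f(t)\le g(t)$ between the two integrands by the pointwise inequality $m_1(A)\le m_2(A)$ between the two set functions. The decisive observation is that, evaluated on one and the same countable partition $P=\{C_n\}_{n\in\mathbb{N}}$ of $T$ and with one and the same choice of tags $t_n\in C_n$, the two Riemann--Lebesgue sums satisfy the term-by-term inequality
\[
\sum_{n=0}^{+\infty} f(t_n)\,m_1(C_n)\ \le\ \sum_{n=0}^{+\infty} f(t_n)\,m_2(C_n),
\]
which is immediate since $f(t_n)\ge 0$ and $m_1(C_n)\le m_2(C_n)$ for every $n$. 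Thus the whole difference of integrals will be controlled by the two approximation errors alone.

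First I would fix $\varepsilon>0$. Because $f$ is $m_1$-integrable there is a partition $P_1\in\mathcal{P}$, and because $f$ is $m_2$-integrable there is a partition $P_2\in\mathcal{P}$, each forcing the corresponding Riemann--Lebesgue sums of all finer partitions to lie within $\varepsilon/2$ of the respective integral. Passing to the common refinement $P_0=P_1\wedge P_2$ (Definition \ref{2.1}.iii), which is again a countable partition into $\mathcal{A}$-sets, any countable partition $P=\{C_n\}_{n\in\mathbb{N}}$ with $P\ge P_0$ is finer than both $P_1$ and $P_2$, so \emph{both} approximations hold simultaneously for the \emph{same} $P$ and the \emph{same} tags $t_n$. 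Writing $S_i:=\sum_{n=0}^{+\infty} f(t_n)\,m_i(C_n)$ for $i=1,2$, I would then split
\[
\int_T f\,dm_1-\int_T f\,dm_2
=\Big(\int_T f\,dm_1-S_1\Big)+\big(S_1-S_2\big)+\Big(S_2-\int_T f\,dm_2\Big),
\]
bound the first and third brackets by $\varepsilon/2$ in absolute value, and use the displayed term-by-term inequality to get $S_1-S_2\le 0$ for the middle bracket. This yields $\int_T f\,dm_1-\int_T f\,dm_2<\varepsilon$, and letting $\varepsilon\to 0$ gives $\int_T f\,dm_1\le\int_T f\,dm_2$.

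The step that requires the most care is not the algebra but the justification that the two integrability hypotheses may be exploited on a \emph{single} partition with a \emph{single} system of tags; this is exactly what the common refinement $P_0=P_1\wedge P_2$ guarantees, and it is the only place where the structure of partitions enters. A minor point worth checking is the boundedness requirement in Definition \ref{3.1}: on a piece $C_n$ with $m_1(C_n)>0$ one automatically has $m_2(C_n)>0$ (since $m_1\le m_2$), so the boundedness of $f$ supplied by $m_2$-integrability already covers precisely the pieces relevant to the $m_1$-sum. Finally, since $f$ is scalar and non-negative, absolute and unconditional convergence coincide, and the argument uses no additivity assumption on $m_1$ or $m_2$ whatsoever.
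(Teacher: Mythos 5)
Your proposal is correct and is essentially the paper's own argument: the paper proves Theorem \ref{monot2} ``analogously'' to Theorem \ref{monotonicity}, i.e.\ by the same three-term split of the difference of integrals, with the middle term $S_1-S_2\le 0$ now coming from $f\ge 0$ and $m_1\le m_2$ instead of $f\le g$. Your explicit use of the common refinement $P_1\wedge P_2$ and the remark about boundedness on pieces of positive measure only make precise details the paper leaves implicit.
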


In the next theorem we point out some results concerning the properties of the set
function $I_f$.

\begin{theorem}\label{properties}
Let $f:T\rightarrow X$ be a vector function such that $f$
is $|RL|$ $m $-integrable on $T$.
Then the following properties hold:
\begin{itemize}
\item[\rm \ref{properties}.i)] If  $f$ is bounded, and $\overline{m}(T) < +\infty$, then 
\begin{itemize}
\item[\rm \ref{properties}.i.a)]  $I_f \ll \overline{m} $ (in the $\varepsilon$-$\delta$ sense)
and $I_f$ is of finite variation.
\item[\rm \ref{properties}.i.b)]  If moreover 
 $\overline{m}$ is o-continuous (exhaustive respectively), then $I_f$ is also o-continuous (exhaustive respectively).
\end{itemize}
\item[\rm \ref{properties}.ii)]  If $f:T \rightarrow [0,\infty)$  and $m $ is  scalar-valued and monotone, then the same holds for $I_f$.

\end{itemize}
\end{theorem}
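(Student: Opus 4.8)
The plan is to reduce the entire statement to a single localized norm estimate and then read off each item from it. Writing $M:=\sup_{t\in T}\|f(t)\|$, I would first establish
\begin{equation}
\|I_f(A)\|\le M\,\overline{m}(A),\qquad A\in\mathcal{A}. \tag{$\star$}
\end{equation}
To obtain $(\star)$ I would rerun the computation of Proposition \ref{upper-bound} with partitions of $A$ in place of partitions of $T$: for a partition $\{B_n\}_n$ of $A$ with tags $s_n\in B_n$ one has $\|\sum_n f(s_n)m(B_n)\|\le M\sum_n m(B_n)\le M\,\overline{m}(A)$, the last step using that the $B_n$ are pairwise disjoint subsets of $A$ and that $\overline{m}(A)\le\overline{m}(T)<+\infty$ by monotonicity of the variation (Remark \ref{rem-var}). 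Since $f$ is $|RL|$ $m$-integrable on $A$ by Theorem \ref{3.4}.a) and $I_f(A)$ is the norm-limit of such sums, the bound passes to the limit.

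For \ref{properties}.i.a) I would then argue directly from $(\star)$. The absolute continuity $I_f\ll\overline{m}$ is immediate: if $M=0$ then $f=0$ and $I_f\equiv 0$; otherwise, given $\varepsilon>0$, the choice $\delta:=\varepsilon/M$ makes $\overline{m}(A)<\delta$ force $\|I_f(A)\|\le M\,\overline{m}(A)<\varepsilon$. For the finite variation I would take a finite disjoint family $\{A_i\}_{i=1}^k\subset\mathcal{A}$, sum $(\star)$, and bound $\sum_{i=1}^k\overline{m}(A_i)\le\overline{m}(\bigcup_i A_i)\le\overline{m}(T)$ using the super-additivity and monotonicity of $\overline{m}$ (Remark \ref{rem-var}); taking the supremum over all such families gives $\overline{I_f}(T)\le M\,\overline{m}(T)<+\infty$.

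Item \ref{properties}.i.b) again falls out of $(\star)$: for $A_n\searrow\emptyset$, o-continuity of $\overline{m}$ gives $\overline{m}(A_n)\to 0$, whence $\|I_f(A_n)\|\le M\,\overline{m}(A_n)\to 0$, so $I_f$ is o-continuous; the exhaustive case is identical, with an arbitrary sequence of pairwise disjoint sets in place of the decreasing one. For \ref{properties}.ii) the estimate $(\star)$ is not needed: by Theorem \ref{3.4}.a) the function $f$ is $|RL|$ $m$-integrable on every $A\in\mathcal{A}$, and by Corollary \ref{cortreotto} the set function $I_f$ is finitely additive. Since $f\ge 0$ and $m\ge 0$, every Riemann-Lebesgue sum $\sum_n f(t_n)m(C_n)$ is non-negative, and these sums approximate $I_f(C)$ arbitrarily well on fine enough partitions, so $I_f(C)\ge 0$ for every $C\in\mathcal{A}$. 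Hence, for $A\subseteq B$, writing $B=A\cup(B\setminus A)$ as a disjoint union and using additivity, $I_f(B)=I_f(A)+I_f(B\setminus A)\ge I_f(A)$, i.e. $I_f$ is monotone.

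The main obstacle is really only the first step: producing $\overline{m}(A)$, rather than $\overline{m}(T)$, on the right-hand side of $(\star)$. This forces one to evaluate the integral of $f$ over $A$ through partitions of $A$ (Theorem \ref{3.4}) instead of applying Proposition \ref{upper-bound} to $f\chi_A$ on $T$, which would only yield the cruder bound with $\overline{m}(T)$ and would not localize absolute continuity, finite variation, o-continuity, or exhaustivity. Once $(\star)$ is established in this sharper form, items \ref{properties}.i.a) and \ref{properties}.i.b) are one-line consequences, and \ref{properties}.ii) is a short additivity-plus-positivity argument.
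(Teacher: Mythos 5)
Your proof is correct. For parts \ref{properties}.i.a) and \ref{properties}.i.b) you follow essentially the paper's route: the paper also deduces everything from the localized bound $\|I_f(A)\|\le M\,\overline{m}(A)$ (which it attributes to Proposition \ref{upper-bound}) together with the super-additivity and monotonicity of $\overline{m}$ recorded in Remark \ref{rem-var}; your explicit derivation of $(\star)$ by running Riemann--Lebesgue sums over partitions of $A$ (using Theorem \ref{3.4}.a)) is in fact a welcome clarification, since Proposition \ref{upper-bound} as literally stated only yields the cruder bound with $\overline{m}(T)$. Where you genuinely diverge is part \ref{properties}.ii): the paper argues directly with partitions --- fixing $A\subseteq B$, refining a partition of $B$ by $\{C_n, B\setminus A\}_n$, intersecting with $A$, and comparing the two Riemann--Lebesgue sums termwise via $m(E_n\cap A)\le m(E_n)$, which is exactly where the monotonicity hypothesis on $m$ enters. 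You instead invoke Corollary \ref{cortreotto} (finite additivity of $I_f$, valid with no additivity or monotonicity assumption on $m$), note that $I_f\ge 0$ because every approximating sum is non-negative, and conclude $I_f(B)=I_f(A)+I_f(B\setminus A)\ge I_f(A)$. Your argument is shorter and strictly more general: it uses only $m\ge 0$ and $f\ge 0$, so it shows that the hypothesis ``$m$ monotone'' in \ref{properties}.ii) is actually redundant; the paper's partition argument, by contrast, is self-contained and does not route through the additivity of the induced set function $I_f$.
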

\begin{proof} 
\ref{properties}.i.a)  The absolute continuity of $I_f$ in the $\varepsilon$-$\delta$ sense follows from 
Proposition \ref{upper-bound}.
In order to prove that $I_f$ is of finite variation let $\{A_{i}\}_{i=1, \ldots, n}\subset \mathcal{P}(T)$ be pairwise
disjoint sets and $M=$ $\underset{t\in T}{\sup }\|f(t)\|.$ By  Proposition \ref{upper-bound}
 and Remark \ref{rem-var}, 
it follows 
$$\sum_{i=1}^{n}\|I_f(A_{i})\|\leq M\cdot\sum_{i=1}^{n}
\overline{m }(A_{i})\leq M\cdot\overline{m }(T).$$
 This implies $\overline{
I_f}(T)\leq M$ $\overline{m }(T)$,
which ensures that $I_f$ is of finite variation.\newline
\ref{properties}.i.b) Let $M=\sup_{t\in T}\|f(t)\|.$ If $M=0$, then $f=0$, hence $I_f=0.$\\
Let us suppose that $M>0$. According to 
Proposition \ref{upper-bound} 
we have $\|I_f(A)\|\leq M\cdot \overline{m}(A),$
for every $A\in \mathcal{A}.$ If $\overline{m}$ is o-continuous, it follows that $I_f$ is also o-continuous.
The proof is similar when $\overline{m}$ is exhaustive. \\
\ref{properties}.ii)
 Let be $A, B\in \mathcal{A}$ with $A\subseteq B$ and $\varepsilon >0$. Since $f$ is $%
m $-integrable on $A$, there exists a countable partition $P_{1}=\{C_{n}\}_{n\in \mathbb{N}}\in \mathcal{P}_{A}$ so that for every
 other countable partition $P=\{A_{n}\}_{n\in \mathbb{N}}\in \mathcal{P}_{A},P\geq P_{1}$ and
every $t_{n}\in A_{n},n\in \mathbb{N}$, the series $\sum\limits_{n=0}^{+\infty}f(t_{n})m (A_{n})$ is
absolutely convergent and
\begin{eqnarray}\label{11}
\left|I_f(A) -\sum_{n=0}^{+\infty}f(t_{n})m (A_{n})\right|<\frac{\varepsilon}{2}.
\end{eqnarray}

Analogously, since $f$ is $m $-integrable on $B$, there exists a countable partition
 $P_{2}=\{D_{n}\}_{n\in \mathbb{N}}\in \mathcal{P}_{B}$
 such that for every other countable partition
$P=\{B_{n}\}_{n\in \mathbb{N}}\in \mathcal{P}_{B},P\geq P_{2}$ 
and every $
t_{n}\in B_{n},n\in \mathbb{N}$, the series $\sum\limits_{n=0}^{+\infty}f(t_{n})m (B_{n})$
is absolutely convergent and
\begin{eqnarray}\label{12}
\left|I_f(B) -\sum_{n=0}^{+\infty}f(t_{n})m (B_{n})\right|<\frac{\varepsilon}{2}.
\end{eqnarray}
Consider $\widetilde{P}_{1}= \{C_{n}, B\setminus A\}_{n\in \mathbb{N}}$.
Then $\widetilde{P}_{1}\in \mathcal{P}_{B}$ and $\widetilde{P}_{1}\wedge P_{2}\in \mathcal{P}_{B}.$  Let $P=\{E_{n}\}_{n\in\mathbb{N}}\in \mathcal{P}_{B}$ be
an arbitrary countable partition, with $P\geq \widetilde{P}_{1}\wedge P_{2}$.
We observe that $P^{''}=\{E_{n}\cap A\}_{n\in \mathbb{N}}$ is also a partition of $A$ and $P^{''}\geq P_{1}.$
If $t_{n}\in E_{n}\cap A, n\in \mathbb{N}$, by (\ref{11}) and (\ref{12}), we have $|I_f(B) 
- \sum\limits_{n=0}^{+\infty}f(t_{n})m (E_{n})|<\dfrac{\varepsilon }{2}$ and $
|I_f(A) -\sum\limits_{n=0}^{+\infty}f(t_{n})m (E_{n}\cap A)|<\dfrac{\varepsilon}{2}$. Therefore
\begin{eqnarray*}
&&
 I_f(A) - I_f(B) \leq \left| I_f(A)
-\sum\limits_{n=0}^{+\infty}f(t_{n})m (E_{n}\cap A)\right|+
\\ &+& 
\left[\sum\limits_{n=0}^{+\infty}f(t_{n})m
(E_{n}\cap A)-\sum\limits_{n=0}^{+\infty}f(t_{n})m
(E_{n}) \right]+ \left|\sum\limits_{n=0}^{+\infty}f(t_{n})m (E_{n})-
I_f(B) \right|< \\
&<&
\varepsilon+ \left[\sum\limits_{n=0}^{+\infty}f(t_{n})%
m (E_{n}\cap A)-\sum\limits_{n=0}^{+\infty}f(t_{n})m (E_{n}) \right].
\end{eqnarray*}
From the hypothesis, it results 
$\sum\limits_{n=0}^{+\infty}f(t_{n})m
(E_{n}\cap A)\leq \sum\limits_{n=0}^{+\infty}f(t_{n})m (E_{n}).$
Consequently, 
$I_f(A) - I_f(B) \leq\varepsilon $, 
for every $\varepsilon >0$, which implies
 $I_f(A) \leq I_f(B) .$
  \end{proof}

\section{Comparative results}\label{comp}
In this section, comparison results among Riemann-Lebesgue,
 and Birkhoff simple \cite{CCG1}
integrabilities are presented.
\begin{definition}\label{4.3}
\rm (\cite[Definition 3.2]{CCG1})
A vector function $f:T\rightarrow X$ is called \textit{Birkhoff
simple $m$-integrable (on $T$)} if there exists $a\in X$ such that
for every $\varepsilon>0$, there exists a countable partition $P_{\varepsilon}$ of
$T$ so that for every other countable partition $P=\{A_{n}\}_{n\in
\mathbb{N}}$ of $T$, with $P\geq P_{\varepsilon}$ and every $t_{n}\in
A_{n}, n\in \mathbb{N},$ it holds
$\limsup_{n\rightarrow +\infty}\|\sum_{k=0}^{n}f(t_{k})m (A_{k})- a\|<\varepsilon.$\\
The vector $a$ is denoted by $(Bs)\displaystyle{\int_T}fdm$ and it is called \textit{the Birkhoff simple integral}
of $f$ (on $T$) with respect to $m$.
\end{definition}

\begin{theorem}\label{4.5}
If $f:T\rightarrow X$ is RL $m$-integrable, then
$f$ is also Birkhoff simple $m$-integrable and the two integrals coincide.
\end{theorem}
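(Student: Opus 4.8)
The plan is to verify that the vector $b:={\scriptstyle (RL)}\int_T f\,dm$ together with, for each $\varepsilon>0$, the very same partition $P_\varepsilon$ furnished by the $RL$-definition already fulfils the requirements of Definition \ref{4.3} with $a=b$. This would simultaneously establish Birkhoff simple integrability and the equality of the two integrals, since the Birkhoff simple integral is uniquely determined.

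So I would fix $\varepsilon>0$ and let $P_\varepsilon$ be as in Definition \ref{3.1}. Given any countable partition $P=\{A_n\}_{n\in\mathbb{N}}$ of $T$ with $P\geq P_\varepsilon$ and arbitrary tags $t_n\in A_n$, the $RL$-hypothesis guarantees that the series $\sum_{n=0}^{+\infty}f(t_n)m(A_n)$ is unconditionally, hence in particular in the ordinary sense, convergent; call its sum $S$. The $RL$-estimate then reads $\|S-b\|<\varepsilon$.

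The heart of the argument is the elementary observation that the partial sums $S_N:=\sum_{k=0}^{N}f(t_k)m(A_k)$ converge to $S$, so that by continuity of the norm $\lim_{N\to+\infty}\|S_N-b\|=\|S-b\|$, whence $\limsup_{N\to+\infty}\|S_N-b\|=\|S-b\|<\varepsilon$. This is precisely the inequality demanded in Definition \ref{4.3} (with $a=b$), so $f$ is Birkhoff simple $m$-integrable and the two integrals coincide.

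I expect no serious obstacle here: the only point is conceptual, namely that the $RL$-notion is the stronger one, controlling the actual (unconditionally convergent) sum, whereas Birkhoff simple integrability only controls the $\limsup$ of the partial sums. Once convergence of the series is in hand, the $\limsup$ collapses to the norm of the limiting difference and the implication follows immediately, without any additivity or finiteness assumption on $m$.
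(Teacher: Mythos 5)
Your proposal is correct and follows essentially the same route as the paper's own proof: both take the partition $P_\varepsilon$ from the $RL$-definition, use the (unconditional) convergence of the series to identify $\limsup_{N}\bigl\|\sum_{k=0}^{N}f(t_k)m(A_k)-b\bigr\|$ with $\bigl\|\sum_{n=0}^{+\infty}f(t_n)m(A_n)-b\bigr\|<\varepsilon$, and conclude that the Birkhoff simple condition holds with $a=b$. No gap; the argument is complete as stated.
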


\begin{proof} Let $\varepsilon>0$ be arbitrary and let  $P_{\varepsilon}$ the countable
partition of $T$ that one given by Definition \ref{3.1}. Considering
any countable partition $P=\{A_{n}\}_{n\in \mathbb{N}}$ of $T$,
with $P\geq P_{\varepsilon}$ and $t_{n}\in A_{n}, n\in \mathbb{N}$, by
Definition \ref{3.1}, it results that the series
$\sum\limits_{n=0}^{+\infty}f(t_{n})m (A_{n})$ is unconditionally
convergent and
\begin{eqnarray*}
 \limsup_{n\rightarrow +\infty}
 && 
 \left\|\sum\limits_{k=0}^{n}f(t_{k})m (A_{k})-  {\scriptstyle (|RL|)}\int_T fdm \right\|=
 \\ =&&
  \left\|\sum\limits_{n=0}^{+\infty}f(t_{n})m (A_{n})-
{\scriptstyle (|RL|)}\int_T fdm \right\|<\varepsilon,
\end{eqnarray*}
which implies the Birkhoff simple $m$-integrability of $f$ and
$(Bs)\displaystyle{\int_T}fdm= {\scriptstyle (|RL|)}\displaystyle{\int_T}fdm.$
 \end{proof}

Next, we highlight some comparison results between Riemann-Lebesgue and Gould integrabilities. 
We first recall the Gould integrability for vector functions (called RLF-integrability in \cite{Po}).\\

Let $(X, \|\cdot\|)$ be a Banach space. If $f:T\rightarrow X$ is a vector function, we denote by $\sigma (P)$ the sum $\sigma (P):=
 \sum\limits_{i=1}^{n}f(t_{i})m (A_{i}),$ for every finite partition of $T$, $P=\{A_{i}\}_{i=1, \ldots, n}\in \mathcal{P}$
 and every $t_{i}\in A_{i}, i=1,\ldots, n.$
\begin{definition}\label{4.8}
\rm (\cite{G})
A vector function $f:T\rightarrow X$ is called \textit{Gould $m$-integrable} (on $T$) if the
net $(\sigma(P))_{P\in (\mathcal{P},\leq)}$ is convergent in $X$, 
 where the family 
$\mathcal{P}$ of all {\em finite} partitions of $T$ is ordered by the relation "$\leq$" given in Definition \ref{2.1}.ii). The limit of
$(\sigma(P))_{P\in (\mathcal{P},\leq)}$ is called \textit{the Gould integral
of $f$ with respect to $m$}, denoted by $(G)\displaystyle{\int_T}fdm.$
\end{definition}
\begin{remark}\label{4.9}
\rm
$f$ is \textit{Gould $m$-integrable} if and only if there exists
$a\in X$ such that for every $\varepsilon>0$, there exists a finite partition
$P_{\varepsilon}$ of $T$, so that for every other finite partition $P=\{A_{i}\}_{i=1, \ldots, n}$
of $T$, with $P\geq P_{\varepsilon}$ and every $t_{i}\in A_{i}, i=1, \ldots, n,$ we have
$\|\sigma(P)-a\|<\varepsilon.$
\end{remark}
\begin{proposition}\label{4.10}
  Let $m:\mathcal{A}\rightarrow [0, +\infty)$ be a complete $\sigma$-additive measure of
finite variation, and $f:T\rightarrow \mathbb{R}$ a bounded
function. Then $f$ is Gould $m$-integrable if and only if $f$ is
Riemann-Lebesgue $m$-integrable and the two integrals coincide.
\end{proposition}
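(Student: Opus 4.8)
The plan is to prove the two implications separately, relying throughout on a single mechanism. Since $m$ is non-negative and finitely additive, Remark \ref{rem-var} gives $\overline{m}=m$ on $\mathcal{A}$, so finite variation means $m(T)<+\infty$, i.e. $m$ is a finite $\sigma$-additive measure. Put $M:=\sup_{t\in T}|f(t)|$, the case $M=0$ being trivial. Two elementary facts carry the whole argument. First, for any countable partition $\{A_n\}$ with tags $t_n$ one has $\sum_n|f(t_n)|\,m(A_n)\le M\sum_n m(A_n)=M\,m(T)<+\infty$, so every Riemann--Lebesgue series converges absolutely for free. Second, since $\sum_n m(A_n)=m(T)$ is a convergent series, the tails $\sum_{n>N}m(A_n)=m\left(\bigcup_{n>N}A_n\right)$ tend to $0$. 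Completeness of $m$ is a standing hypothesis inherited from the Gould setting, but it plays no active role in the estimates below.

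To show that Gould integrability implies Riemann--Lebesgue integrability, set $a:=(G)\int_T f\,dm$, fix $\varepsilon>0$, and simply take $P_\varepsilon$ to be the finite partition $Q_\varepsilon$ furnished by Remark \ref{4.9}, now read as a countable partition. For any countable $P=\{A_n\}\ge P_\varepsilon$ with tags $t_n\in A_n$ the series $S:=\sum_n f(t_n)m(A_n)$ converges absolutely, and $f$ is trivially bounded on each block, so the requirements of Definition \ref{3.1} are met. To locate $S$ I would estimate its partial sums $S_N=\sum_{n\le N}f(t_n)m(A_n)$: introducing the finite partition $Q_N=\{A_0,\dots,A_N,R_N\}$ with $R_N=\bigcup_{n>N}A_n$ and passing to the common refinement $Q_N\wedge Q_\varepsilon\ge Q_\varepsilon$, I note that each $A_n$ with $n\le N$ already lies in a single block of $Q_\varepsilon$ (because $P\ge Q_\varepsilon$) and is therefore left intact, so only $R_N$ gets subdivided. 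Hence $\sigma(Q_N\wedge Q_\varepsilon)=S_N+\rho_N$ with $\|\rho_N\|\le M\,m(R_N)$, and Gould's criterion gives $\|S_N-a\|\le\|\sigma(Q_N\wedge Q_\varepsilon)-a\|+M\,m(R_N)<\varepsilon+M\,m(R_N)$. Letting $N\to\infty$ and using $m(R_N)\to 0$ yields $\|S-a\|\le\varepsilon$, so $f$ is RL $m$-integrable with the same integral $a$.

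The reverse implication is the delicate one, and I expect it to be the main obstacle, because a finite partition can never be finer than a genuinely infinite Riemann--Lebesgue partition. Let $b:=(RL)\int_T f\,dm$, fix $\varepsilon>0$, and let $P_\varepsilon=\{B_n\}$ be the countable partition from Definition \ref{3.1}. I would first truncate: using $\sum_n m(B_n)=m(T)<+\infty$, choose $N$ with $\sum_{n>N}m(B_n)<\varepsilon/(2M)$ and set $Q_\varepsilon=\{B_0,\dots,B_N,\bigcup_{n>N}B_n\}$, a finite partition. For an arbitrary finite $Q\ge Q_\varepsilon$ with tags, I would bridge back to an admissible Riemann--Lebesgue sum by forming $\widehat{Q}=Q\wedge P_\varepsilon$, which is countable and finer than $P_\varepsilon$, so $\|\sigma(\widehat{Q})-b\|<\varepsilon$ by Definition \ref{3.1}. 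The crucial observation is the same as before: the blocks of $Q$ contained in $B_0,\dots,B_N$ are not split by $P_\varepsilon$, so (after matching tags on the unsplit blocks) $\sigma(Q)$ and $\sigma(\widehat{Q})$ can differ only on the blocks of $Q$ lying in the tail $\bigcup_{n>N}B_n$. Expanding $m(A)=\sum_{n>N}m(A\cap B_n)$ there and bounding each $|f(t)-f(t')|\le 2M$ gives $\|\sigma(Q)-\sigma(\widehat{Q})\|\le 2M\sum_{n>N}m(B_n)<\varepsilon$, hence $\|\sigma(Q)-b\|<2\varepsilon$. By Remark \ref{4.9} this is exactly Gould integrability, with integral $b$.

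Both directions thus rest on the identical pair of ingredients --- boundedness of $f$ and the vanishing of the tails of the finite $\sigma$-additive measure $m$ --- and in each case the construction reproduces the other integral's value, so the coincidence of the two integrals is automatic. The only genuine difficulty, concentrated in the third paragraph, is the incompatibility between finite and countable refinements: its resolution is the truncation step, which forces the discarded tail $\bigcup_{n>N}B_n$ to carry mass below $\varepsilon/(2M)$, after which the mismatch between $\sigma(Q)$ and the legitimate countable sum $\sigma(Q\wedge P_\varepsilon)$ is confined to that tail and controlled by $2M$ times its measure. I would finally remark that every estimate above is uniform in the choice of tags, which is precisely what legitimises the term-by-term comparisons between $\sigma(Q)$, $\sigma(\widehat{Q})$, and the partial sums $S_N$.
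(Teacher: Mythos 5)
Your proof is correct, but it takes a genuinely different route from the paper. The paper disposes of Proposition \ref{4.10} in one line, by composing two external results through the Birkhoff integral as a middleman: \cite[Theorem 5.1]{CG} (equivalence of Gould and Birkhoff integrability for bounded functions with respect to a complete $\sigma$-additive measure of finite variation) together with \cite[Theorem 8]{Po} (equivalence of Birkhoff and Riemann--Lebesgue integrability in the $\sigma$-additive setting). You instead give a direct, self-contained $\varepsilon$-argument in which both implications rest on the same two facts — boundedness of $f$ and the vanishing of the tails $\sum_{n>N}m(A_n)$ of the finite $\sigma$-additive measure — with the Gould$\Rightarrow$RL direction handled by comparing partial sums $S_N$ to the Gould sum of the finite partition $\{A_0,\dots,A_N,R_N\}\wedge Q_\varepsilon$, and the RL$\Rightarrow$Gould direction handled by truncating the countable RL partition and confining the discrepancy $\sigma(Q)-\sigma(Q\wedge P_\varepsilon)$ to the low-mass tail. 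Your refinement bookkeeping is sound (blocks inside $B_0,\dots,B_N$ are genuinely left intact by the common refinement, and the tail contributions are controlled by $M\,m(R_N)$ and $2M\sum_{n>N}m(B_n)$ respectively); the only cosmetic blemish is that the first direction ends with $\|S-a\|\le\varepsilon$ rather than a strict inequality, which is fixed by running the argument with $\varepsilon/2$. What each approach buys: the paper's citation proof is short and situates the result within the known web of equivalences (Gould--Birkhoff--RL), while yours is longer but elementary, independent of the Birkhoff machinery, and — as you correctly observe — exposes that completeness of $m$ is never actually used in the estimates; it enters only through the hypotheses of the quoted theorems. Your bridging technique is also essentially the same one the paper itself uses in Theorem \ref{3.4}, so it fits the paper's toolkit naturally.
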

\begin{proof}
It is an easy consequence of \cite[Theorem 5.1]{CG} and of \cite[Theorem 8]{Po}.
\end{proof}
\begin{example}\label{esempio}\rm 
We show now that, without assuming additivity of $m$, the previous equivalence does not hold. Suppose that $T=\enne$, with $\mathcal{A}=\mathcal{P}(\enne)$,
and define the scalar measure $m$ as follows:
$$m(A)=\left\{\begin{array}{ll}
0, \ & card(A)<+\infty\\
1, &  card(A)=+\infty.\end{array}\right.$$
Then, the constant function $f(n)\equiv 1$ is clearly simple-Birkhoff integrable, with null integral, and also RL integrable with the same integral: it suffices to take as partition $P_{\varepsilon}$ the finest possible one, consisting of all singletons.
\\
However, this mapping is not Gould-integrable. In fact, if $P_{\varepsilon}$ is any finite partition of $\enne$, some of its sets are infinite, so the quantity $\sigma(P_{\varepsilon})$ coincides with the number of the infinite sets of $P_{\varepsilon}$. And the  quantity $\sigma(P)$ is unbounded when  $P$ runs over the family of all finer partitions of  $P_{\varepsilon}$.
\end{example}

\begin{theorem}\label{part1}
 Suppose $m:\mathcal{A}\rightarrow [0, +\infty)$ is monotone, $\sigma$-subadditive and of finite variation and let $f:T\rightarrow \mathbb{R}$ be
a bounded function. Then $f$ is Gould $m$-integrable if and only
if $f$ is Riemann-Lebesgue $m$-integrable.\\ In this case,
$(G)\displaystyle{\int_T}fdm= {\scriptstyle (|RL|)}\displaystyle{\int_T}fdm.$
\end{theorem}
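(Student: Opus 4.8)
The plan is to prove the two implications directly, comparing the finite-partition sums of the Gould definition with the countable-partition sums of the Riemann--Lebesgue definition, using the variation $\overline{m}$ as the quantitative control. First I would record the two facts that drive everything. Since $\sigma$-subadditivity implies (finite) subadditivity, $m$ is a submeasure, so by Remark \ref{rem-var} the variation $\overline{m}$ is $\sigma$-additive on $\mathcal{A}$, and it is finite because $m$ is of finite variation. Consequently, for any countable partition $\{A_n\}_{n}$ of $T$ one has $\sum_{n} m(A_n)\le \sum_n \overline{m}(A_n)=\overline{m}(T)<+\infty$, so (writing $M:=\sup_{t}|f(t)|$ and assuming $M>0$, the case $M=0$ being trivial) every Riemann--Lebesgue series converges absolutely, and moreover $\overline{m}\big(\bigcup_{n>N}A_n\big)=\sum_{n>N}\overline{m}(A_n)\to 0$. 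This vanishing of the variation of the tail is the engine of both directions.

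For \emph{Gould $\Rightarrow$ RL}, let $a=(G)\int_T f\,dm$ and fix $\varepsilon>0$; by Remark \ref{4.9} choose a finite partition $P_\varepsilon=\{B_1,\dots,B_k\}$ controlling all finer finite sums. Given any countable $P=\{A_n\}\ge P_\varepsilon$ with tags $t_n$, I would truncate it into the \emph{finite} partition $Q_N=\{A_0,\dots,A_N\}\cup\{B_j\cap R_N: j\}$, where $R_N=\bigcup_{n>N}A_n$; since each $A_n$ and each $B_j\cap R_N$ sits inside a member of $P_\varepsilon$, we have $Q_N\ge P_\varepsilon$, hence $|\sigma(Q_N)-a|<\varepsilon$. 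The sets $B_j\cap R_N$ are disjoint subsets of $R_N$, so their contribution to $\sigma(Q_N)$ is at most $M\,\overline{m}(R_N)\to 0$; letting $N\to\infty$ yields $\big|\sum_{n}f(t_n)m(A_n)-a\big|\le\varepsilon$, i.e. $f$ is RL integrable with integral $a$.

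For \emph{RL $\Rightarrow$ Gould}, let $b={\scriptstyle(|RL|)}\int_T f\,dm$ and fix $\varepsilon>0$; take the countable partition $P_\varepsilon=\{C_n\}$ from Definition \ref{3.1} (for $\varepsilon/2$), choose $N$ with $\overline{m}(R)<\delta:=\varepsilon/(4M)$ for $R=\bigcup_{n>N}C_n$, and set the finite partition $P'_\varepsilon=\{C_0,\dots,C_N,R\}$. For an arbitrary finite $P=\{D_i\}\ge P'_\varepsilon$ with tags $t_i$, I would form the countable common refinement $\{D_i\cap C_n\}\ge P_\varepsilon$ and evaluate the RL sum on it, choosing on each piece $D_i\subseteq C_j$ (with $j\le N$) the \emph{same} tag $t_i$. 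On those pieces $\sum_n m(D_i\cap C_n)=m(D_i)$, so these terms of the RL sum coincide exactly with the corresponding terms of $\sigma(P)$; all remaining terms come from the $D_i\subseteq R$, and on both sides they form disjoint families inside $R$, hence are bounded in total by $M\,\overline{m}(R)<M\delta$. Thus $|\sigma(P)-(\text{RL sum})|<2M\delta=\varepsilon/2$, and since the RL sum is within $\varepsilon/2$ of $b$, we get $|\sigma(P)-b|<\varepsilon$; by Remark \ref{4.9}, $f$ is Gould integrable with integral $b$.

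The step I expect to be the main obstacle is precisely this second implication, because $m$ is only $\sigma$-subadditive: in general $m(D_i)\neq\sum_n m(D_i\cap C_n)$, so refining a finite partition to a countable one genuinely changes the Riemann sum. The way around this is structural rather than computational: after refining only beyond the $N$-th set, all the non-additivity discrepancy is confined to the single tail block $R$, whose variation $\overline{m}(R)$ we have already made arbitrarily small, while on the finitely many stable blocks contained in $C_0,\dots,C_N$ no refinement occurs and the two sums agree termwise once the tags are matched. Both arguments exhibit the Gould and the Riemann--Lebesgue integrals as the \emph{same} limit, which simultaneously yields the asserted equality $(G)\int_T f\,dm={\scriptstyle(|RL|)}\int_T f\,dm$.
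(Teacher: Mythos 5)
Your proof is correct, but it takes a genuinely different route from the paper's. The paper disposes of the theorem in three lines by chaining cited results: since $m$ is a submeasure of finite variation, Gould $m$-integrability is equivalent to Gould $\overline{m}$-integrability with equal integrals (\cite[Theorem 2.15]{GP}); since $\overline{m}$ is $\sigma$-additive and finite (Remark \ref{rem-var}), Gould $\overline{m}$-integrability is equivalent to Riemann--Lebesgue integrability by Proposition \ref{4.10}; and the passage back from $\overline{m}$ to $m$ on the Riemann--Lebesgue side is delegated to \cite[Theorem 5.5]{CG}. You instead prove both implications directly from Definitions \ref{3.1} and \ref{4.8}, with the single quantitative input $\overline{m}\left(\bigcup_{n>N}A_n\right)=\sum_{n>N}\overline{m}(A_n)\to 0$: the truncation $Q_N$ in one direction and the ``refine only inside the small tail block $R$'' device in the other are both sound, and your tag-matching on the blocks $D_i\subseteq C_j$, $j\le N$, correctly neutralizes the failure of additivity of $m$ under refinement, which is indeed the crux you identified. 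What your approach buys: it is self-contained (no appeal to \cite{GP}, to \cite{CG}, or to the results of \cite{Po} and \cite{CG} hidden inside Proposition \ref{4.10}), it makes the mechanism visible, and it quietly dispenses with the completeness hypothesis that Proposition \ref{4.10} imposes on the $\sigma$-additive measure. What the paper's route buys is brevity and reusability: the same reduction-to-variation template yields Theorem \ref{part2} with a different citation. One pedantic point: in the Gould $\Rightarrow$ RL direction you use the finite Gould partition as the control partition in Definition \ref{3.1}, which formally asks for a countable one; this is harmless under the convention, used implicitly throughout the paper (e.g.\ in the proof of Theorem \ref{3.4}), that finite partitions count as countable, or after replacing $P_\varepsilon$ by any countable partition finer than it.
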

\begin{proof} Since $m$ is a submeasure of finite
variation, according to  \cite[Theorem 2.15]{GP}, $f$ is Gould
$m$-integrable if and only if
$f$ is Gould $\overline{m}$-integrable and $(G)\displaystyle{\int_T}fdm=
(G)\displaystyle{\int_T}fd\overline{m}.$ On the other hand, from Remark \ref{rem-var}   
and Proposition \ref{4.10},
Gould $\overline{m}$-integrability  is
equivalent to Riemann-Lebesgue and the two integrals coincide.
 Now the conclusion follows by \cite[Theorem 5.5]{CG}. 
\end{proof}

In the general case only partial results can be obtained, for example:
\begin{theorem}\label{part2}
Suppose that $m:\mathcal{A}\rightarrow [0, +\infty)$ satisfies property ($\sigma$),  is
monotone, null-additive set function and $A\in \mathcal{A}$ is an atom of $m$. If a bounded function
$f:T\rightarrow \mathbb{R}$ is Riemann-Lebesgue $m$-integrable on $A$, then
$f$ is Gould $m$-integrable on $A$ and $(G)\displaystyle{\int_A}fdm={\scriptstyle (|RL|)}\displaystyle{\int_A}fdm.$
\end{theorem}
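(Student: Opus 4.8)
The plan is to use the atom structure of $A$ to collapse every Riemann sum, whether over a finite or a countable partition, to a single surviving term. First I would record the following dichotomy: for any at-most-countable partition $\{A_n\}\subseteq\mathcal{A}$ of $A$, exactly one block, say $A_{n_0}$, has $m(A_{n_0})>0$, and in fact $m(A_{n_0})=m(A)$, while $m(A_n)=0$ for all $n\neq n_0$. Uniqueness and the equality $m(A_{n_0})=m(A)$ follow from the atom property together with null-additivity and monotonicity: if $m(A_{n_0})>0$ then $m(A\setminus A_{n_0})=0$ because $A$ is an atom, hence $m(A)=m(A_{n_0})$ by null-additivity, and every other block is contained in $A\setminus A_{n_0}$ and so is $m$-null by monotonicity. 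The existence of such a block is the point where the hypotheses are used differently for the two kinds of partitions: for a finite partition, null-additivity (giving that a finite union of $m$-null sets is $m$-null) forbids all blocks being null; for a countably infinite partition, this is exactly property $(\sigma)$. Thus for every tagged partition the sum reduces to $f(t_{n_0})\,m(A)$.

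Next I would extract a near-constancy property of $f$ from its $RL$-integrability on $A$. Write $b={\scriptstyle (|RL|)}\int_A f\,dm$, fix $\varepsilon>0$, and let $P_{\varepsilon}=\{C_n\}$ be the countable partition provided by Definition \ref{3.1}, with principal block $C_{n_*}$, so that $m(C_{n_*})=m(A)$ and $m(A\setminus C_{n_*})=0$. Applying the defining inequality to $P=P_{\varepsilon}$ itself and to an arbitrary tag $t\in C_{n_*}$, the series collapses to $f(t)\,m(A)$, so that $|f(t)\,m(A)-b|<\varepsilon$ for every $t\in C_{n_*}$. In other words $f$ stays uniformly within $\varepsilon/m(A)$ of $b/m(A)$ on the full-measure block $C_{n_*}$.

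Finally I would convert this into Gould integrability through the finite partition $Q_{\varepsilon}=\{C_{n_*},\,A\setminus C_{n_*}\}$ (keeping only $C_{n_*}$ if the complement is empty). For any finite refinement $Q\geq Q_{\varepsilon}$ with tags, its principal block $B_{j_0}$ satisfies $m(B_{j_0})=m(A)>0$ and lies inside a single block of $Q_{\varepsilon}$; since $A\setminus C_{n_*}$ is $m$-null, necessarily $B_{j_0}\subseteq C_{n_*}$. Hence $\sigma(Q)=f(s_{j_0})\,m(A)$ with $s_{j_0}\in C_{n_*}$, and the previous paragraph gives $|\sigma(Q)-b|<\varepsilon$. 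By Remark \ref{4.9} this is precisely Gould $m$-integrability of $f$ on $A$, with $(G)\int_A f\,dm=b={\scriptstyle (|RL|)}\int_A f\,dm$.

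The real work, and the only place the three assumptions on $m$ are genuinely needed, is the dichotomy of the first paragraph: one must guarantee that a single full-measure block exists for every admissible partition and that all other blocks vanish. Null-additivity and property $(\sigma)$ supply existence in the finite and the countably infinite cases respectively, the atom property supplies uniqueness, and monotonicity kills the remaining blocks. Once this collapse is established the transition from countable ($RL$) to finite (Gould) partitions is essentially automatic, because both selection procedures place the decisive tag inside the same full-measure set $C_{n_*}$.
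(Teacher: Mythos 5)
Your proof is correct, but it follows a genuinely different route from the paper's. The paper disposes of this theorem in one line: by Theorem \ref{4.5}, RL-integrability implies Birkhoff simple integrability with the same integral, and then it invokes the external result \cite[Theorem 5.2]{CG}, which states precisely that for a monotone, null-additive $m$ satisfying property $(\sigma)$, a bounded Birkhoff simple integrable function on an atom is Gould integrable there with the same value; all the work involving the hypotheses is outsourced to that citation. You instead prove everything from scratch via the partition-collapse dichotomy: every finite or countable measurable partition of the atom $A$ has exactly one block of measure $m(A)$, all remaining blocks being null, where property $(\sigma)$ gives existence of the positive block for countable partitions, null-additivity gives it for finite partitions and yields $m(A_{n_0})=m(A)$, atomicity gives uniqueness, and monotonicity kills the other blocks. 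This collapse reduces every RL sum and every Gould sum to a single term $f(t)\,m(A)$ with $t$ in a full-measure block, after which the passage from countable to finite partitions is immediate. What the paper's route buys is brevity and the placement of the result in the chain RL $\Rightarrow$ Birkhoff simple $\Rightarrow$ Gould; what yours buys is a self-contained argument, readable without the external reference, which makes visible exactly where each of the four hypotheses enters --- in effect you have reproved the relevant case of \cite[Theorem 5.2]{CG}. One small point worth making explicit: your dichotomy, and the division by $m(A)$ in your second paragraph, presuppose $m(A)>0$; this is part of the standard definition of an atom, but the paper never states it, so it should be recorded as an assumption.
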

\begin{proof} It follows in an analogous way as the previous theorem, using in this case
\cite[Theorem 5.2]{CG}.
\end{proof}
\small
\section*{Acknowledgements}
The first and the last  authors have been partially supported by University of Perugia -- Department of Mathematics and Computer Sciences-- Ricerca di Base 2018 "Metodi di Teoria dell'Approssimazione, Analisi Reale, Analisi Nonlineare e loro applicazioni"  and the last author  by  GNAMPA - INDAM (Italy) "Metodi di Analisi Reale per l'Approssimazione attraverso operatori discreti e applicazioni" (2019).

\end{document}